\newcommand\N{\mathbb N}
\newcommand\F{\mathcal{F}}
\newcommand\conv{\otimes}
\newcommand{\ou}{\overline{u}}
\newcommand{\ov}{\overline{v}}
\newcommand\ST{\Sigma_T}
\newcommand\LT{L_{tree}}
\newcommand\LTT{L_T}
\newcommand\SI{\Sigma_{int}}
\newcommand\LI{L_{int}}
\newtheorem{theorem}{Theorem}[section]
\newtheorem{proposition}[theorem]{Proposition}
\newtheorem{lemma}[theorem]{Lemma}
\newtheorem{defn}[theorem]{Definition}
\title{Tree-based Language complexity of Thompson's group F}
\author{Jennifer Taback and Sharif Younes}
\address{Department of Mathematics,
Bowdoin College, Brunswick, ME 04011} \email{jtaback@bowdoin.edu}
\address{NuFit Media, 1 Kendall Square, Suite B2101, Cambridge, MA 02139} \email{sharif@nuftmedia.com}
\thanks{The first author acknowledges support from National Science Foundation grant DMS-1105407 and Simons Foundation grant 31736 to Bowdoin College. Both authors wish to thank Sean Cleary, Bob Gilman, Alexei Miasnikov and especially Murray Elder for many helpful discussions during the writing of this paper.}
\keywords{Thompson's group, automatic group, graph automatic group, $\mathcal C$-graph automatic group}
\subjclass[2010]{20F65, 68Q45}
\begin{document}

\maketitle

\begin{abstract}
The definition of graph automatic groups by Kharlampovich, Khoussainov and Miasnikov and its extension to ${\mathcal C}$-graph automatic by Murray Elder and the first author raise the question of whether Thompson's group $F$ is graph automatic.  We define a language of normal forms based on the combinatorial ``caret types" which arise when elements of $F$ are considered as pairs of finite rooted binary trees, which we show to be accepted by a finite state machine with $2$ counters, and forms the basis of a $3$-counter graph automatic structure for the group.
\end{abstract}

\section{Introduction}\label{sec:intro}

It is not known whether Thompson's group $F$ is automatic; $F$ has some characteristics of an automatic group such as quadratic Dehn function, type $FP_{\infty}$ and word problem solvable in $O(n \log n)$ time (see \cite{GS}, \cite{BG} and \cite{SU}, respectively).  Yet an automatic structure for the group remains elusive.  Guba and Sapir present a regular language of normal forms for elements of $F$ in \cite{GS}, and Cleary and the first author show that $F$ has no regular language of geodesics in \cite{CT}.

With the extension of the notion of an automatic group to a graph automatic group by Kharlampovich, Khoussainov and Miasnikov in \cite{KKM}, and further to a ${\mathcal C}$-graph automatic group by Elder and the first author in \cite{ET}, a natural question is whether $F$ is captured by one of these larger classes of groups.  To summarize these definitions for a group $G$ with finite generating set $S$:

\medskip

\begin{enumerate}
\item $G$ has an automatic structure if there is a normal form for group elements that is recognized by a finite state machine, as well as finite state machines $M_s$ that accept the language of pairs $(g,gs)$, for each $s \in S$.  The machines $M_s$ are called the multiplier automata.
\item $G$ has a graph automatic structure if there is a finite symbol alphabet used to define a normal form for group elements that is recognized by a finite state machine, as well as finite state machines $M_s$ that accept pairs of normal form words corresponding to group elements that differ by $s$, for each $s \in S$.
\item $G$ has a ${\mathcal C}$-graph automatic structure, where ${\mathcal C}$ is a class of languages---for example regular, context free, counter or context-sensitive---and all languages in the previous definition are now allowed to lie in the class ${\mathcal C}$.
\end{enumerate}

The introduction of a symbol alphabet used to express normal forms for group elements seems quite suited to Thompson's group $F$.  If group elements are expressed as reduced pairs of finite rooted binary trees, there is combinatorial information that can be used to express the group element uniquely.  Namely, these trees are constructed from nodes, or {\em carets}, and there are several descriptions of caret types given in the literature that depend on the placement of the carets within the tree.  We follow those given by Elder, Fusy and Rechnitzer in \cite{Murray} and define a quasi-geodesic normal form for elements of $F$.

We show below that $F$ is not graph automatic with respect to this natural normal form language over this set of symbols. This does not rule out the result with respect to a different normal form, or an alternate symbol alphabet.  However, any symbol alphabet which captures the tree structure in any way seems unlikely to the authors to yield the result that $F$ is graph automatic.

In \cite{ET1}, Elder and the first author show that the standard infinite normal form for elements of $F$ is the basis of a $1$-counter graph automatic structure.  The geodesic normal form we construct below based on caret types yields a $3$-counter graph automatic structure for the group.  This could be improved to a $2$-counter graph automatic structure, but the increase in complexity of the argument did not balance out the decrease in counters, given the result in \cite{ET1}.

We remark that a third possibility for constructing a $\mathcal C$-graph automatic structure for $F$ would be to use the regular normal form language of Guba and Sapir \cite{GS}.  In preparing this article we considered the complexity of the multiplier automata for this language and found that the $x_1$ multiplier automaton would require at least $4$ counters, and hence the resulting structure is less efficient overall than the structures obtained in \cite{ET1} and this paper.

The paper is organized as follows.  In Section \ref{sec:graph-aut} we recall the definition of ${\mathcal C}$-graph automatic groups introduced in \cite{ET} which generalizes the definition of graph automatic groups introduced by Kharlampovich, Khoussainov and Miasnikov in \cite{KKM}.  In Section \ref{sec:F} we present relevant background on Thompson's group $F$.  In Sections \ref{Sec:caret-lang} and \ref{subsec:x_0multiplier} we show that the language based on the symbol alphabet consisting of caret types forms a $3$-counter graph automatic structure for $F$.

\section{Background on Generalizations of Automaticity}\label{sec:graph-aut}

There are many definitions in the literature of counter automata; we begin with our definition.
\begin{defn}[counter automaton]
A {\em nonblind deterministic $k$-counter automaton} is
  a deterministic finite
state automaton augmented with {\em k} integer counters: these are all
initialized to zero, and can be incremented, decremented, compared to zero and set to zero during operation.  For each configuration of the machine and subsequent input letter, there is at most one possible move. The automaton accepts a word exactly if upon reading the word it  reaches
an accepting state with all counters returned to zero.
\end{defn}

In drawing a counter automaton, we label transitions by the input letter to be read, with subscript to denote the possible counter instructions:

\medskip

\begin{itemize}
\item $=0$ to indicate the edge may only be traversed if the value of the counter is $0$.
\item $+1$ to increment the counter by $1$.
\item $-1$ to decrement the counter by $1$.
\end{itemize}

In this paper we focus entirely on counter-graph automatic structures, which will be defined below.  The class of counter languages is closed  under homomorphism, inverse homomorphism, intersection with regular languages, and finite intersection (see \cite{HU}, for example).  Moreover, the intersection of a $k$-counter  language with a regular language is $k$-counter, and the intersection of $k$- and $l$-counter languages is a $(k+l)$-counter language, as proven in \cite{ET}.

We introduce the notion of a convolution of strings in a language, following \cite{KKM}, for ease of notation in the multiplier languages we later define.  Let $G$ be a group with symmetric  generating set $X$, and $\Lambda$ a finite set of symbols. In general we do not assume that $X$ is finite.
The number of symbols (letters) in a word $u\in\Lambda^*$ is denoted $|u|_{\Lambda}$.

\begin{defn}[convolution;  Definition 2.3 of \cite{KKM}]
Let $\Lambda$ be a finite set of symbols, $\diamond$  a symbol not in $\Lambda$,  and let $L_1,\dots, L_k$ be a finite set of languages over $\Lambda$. Set  $\Lambda_{\diamond}=\Lambda\cup\{\diamond\}$. Define the {\em convolution of a tuple} $(w_1,\dots, w_k)\in L_1\times \dots \times L_k$ to be the string $\otimes(w_1,\dots, w_k)$ of length $\max (|w_i|_{\Lambda})$ over the alphabet $\left(\Lambda_{\diamond}\right)^k$  as follows.
The $i$th symbol of the string is
\[\left(\begin{array}{c}
\lambda_1\\
\vdots\\
\lambda_k
\end{array}\right)\]
where $\lambda_j$ is the $i$th letter of $w_j$ if $i\leq |w_j|_{\Lambda}$ and $\diamond$ otherwise.
Then  \[\otimes(L_1,\dots, L_k)=\left\{\otimes(w_1,\dots, w_k) \mid w_i\in L_i\right\}.\]
\end{defn}

As an example, if $w_1=aa, w_2=bbb$ and $w_3=a$ then
\[\otimes(w_1,w_2,w_3)=\left(\begin{array}{c}
a\\
b\\
a
\end{array}\right)
\left(\begin{array}{c}
a\\
b\\
\diamond
\end{array}\right)
\left(\begin{array}{c}
\diamond\\
b\\
\diamond
\end{array}\right)\]

When $L_i=\Lambda^*$ for all $i$ the definition in \cite{KKM} is recovered.

We require that the normal form language on which a ${\mathcal C}$-graph automatic structure is based be quasigeodesic.
\begin{defn}[quasigeodesic normal form]
A  {\em normal form for $(G,X,\Lambda)$} is a set of words $L\subseteq \Lambda^*$ in bijection with $G$. A normal form $L$ is
  {\em quasigeodesic}  if there is a constant $D$ so  that $$|u|_{\Lambda}\leq D(||u||_X+1)$$ for each $u\in L$,  where $||u||_X$ is the length of a geodesic  in $X^*$ for the group element represented by  $u$.
\end{defn}
The $||u||_X+1$ in the definition allows for normal forms where the identity of the group is represented by a nonempty string of length at most $D$.  We denote the image of $u\in L$ under the bijection with $G$ by $\ou$.

We now state the definition of a ${\mathcal C}$-graph automatic group, following \cite{ET}, which generalizes the notion of a graph automatic group introduced in \cite{KKM}.

\begin{defn}[$\mathcal C$-graph automatic group] \label{def:Cgraph}
Let $\mathcal C$ be a formal language class, $(G,X)$ a group and symmetric  generating set, and $\Lambda$ a finite set of symbols.  We say that $(G,X,\Lambda)$ is $\mathcal C${\em -graph automatic} if there is a  normal form $L \subset \Lambda^*$ in the language class $\mathcal C$, such that for each $x\in X$ the language $L_x=\{\otimes(u,v) \mid u,v\in L,  \ov =_G \ou x\}$ is in the class $\mathcal C$.
\end{defn}

The following observation is proven in \cite{ET}.

\begin{lemma}[Lemma 2.5 of \cite{ET}]\label{lemma:convolution}
Let $L_1$ and $L_2$ be $k$- and $l$-counter languages respectively.  Then $\otimes(L_1,L_2)$ is a $(k+l)$-counter language.
\end{lemma}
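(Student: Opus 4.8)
The plan is to reduce the statement to the closure properties of counter languages already recalled above, rather than to build a product automaton by hand. Write a letter of $(\Lambda_{\diamond})^2$ as an ordered pair $(a,b)$ with $a,b\in\Lambda_{\diamond}$, and define two monoid homomorphisms $\pi_1,\pi_2:\bigl((\Lambda_{\diamond})^2\bigr)^*\to\Lambda^*$ by $\pi_1\bigl((a,b)\bigr)=a$ and $\pi_2\bigl((a,b)\bigr)=b$ when the relevant coordinate lies in $\Lambda$, and by sending the letter to the empty word when that coordinate equals $\diamond$. Intuitively $\pi_1$ strips off the second coordinate and erases the $\diamond$-padding of the first, recovering the first constituent word, and $\pi_2$ does the same for the second.

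I would then proceed in three steps. First, since the counter languages are closed under inverse homomorphism, and the standard witnessing construction simply re-reads input letters and re-uses the counters of the original machine without introducing new ones, $\pi_1^{-1}(L_1)$ is a $k$-counter language and $\pi_2^{-1}(L_2)$ is an $l$-counter language. Second, the intersection of a $k$-counter and an $l$-counter language is $(k+l)$-counter, so $\pi_1^{-1}(L_1)\cap\pi_2^{-1}(L_2)$ is $(k+l)$-counter. Third, I would intersect with the regular language $R\subseteq\bigl((\Lambda_{\diamond})^2\bigr)^*$ of well-formed convolution shapes, namely those strings in which the $\diamond$'s of each coordinate form a terminal block and no position carries $\diamond$ in both coordinates; a finite state machine checks this, so $R$ is regular, and intersecting a $(k+l)$-counter language with a regular language leaves it $(k+l)$-counter. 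It then remains to verify the set equality $\pi_1^{-1}(L_1)\cap\pi_2^{-1}(L_2)\cap R=\otimes(L_1,L_2)$: for $z\in R$ the word $\pi_1(z)$ is exactly the first constituent and $\pi_2(z)$ the second, and $z=\otimes(\pi_1(z),\pi_2(z))$, so $z$ survives all three intersections precisely when it is the convolution of some $w_1\in L_1$ and $w_2\in L_2$.

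The one genuinely delicate point, and the reason the regular language $R$ is needed, is that the inverse images $\pi_i^{-1}(L_i)$ by themselves also accept malformed strings in which the padding symbol $\diamond$ is interleaved with genuine letters, since $\pi_i$ erases every $\diamond$ regardless of position; such strings are not convolutions and must be removed, which is exactly what intersecting with $R$ accomplishes. Equivalently, one could carry out an explicit product construction: run the two given machines in parallel on the two coordinates using two disjoint blocks of $k$ and $l$ counters (whence the total $k+l$), freeze the simulation of the machine for $L_i$ whenever coordinate $i$ shows $\diamond$, accept when both component states are accepting and all counters are zero, and fold the shape constraint $R$ into the finite control. The homomorphism route is preferable because it isolates every subtlety into the finite-state check $R$ and otherwise invokes only the black-box closure results stated above.
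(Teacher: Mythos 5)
Your argument is correct, but note first that the paper does not actually prove this lemma: it is quoted from \cite{ET} (Lemma 2.5 there), and the proof in that reference is essentially the explicit product construction you sketch in your closing remark --- run the two counter machines in parallel on the two coordinates of the convolution, idling a machine whenever its coordinate shows $\diamond$, using disjoint blocks of $k$ and $l$ counters and the product of the state sets as finite control. Your primary route through inverse homomorphisms is a genuinely different (and tidier) decomposition: it pushes all of the $\diamond$-bookkeeping into the regular shape language $R$ and otherwise invokes only black-box closure facts, and your identification of $R$ (terminal $\diamond$-blocks in each coordinate, no doubly padded position) together with the equality $z=\otimes(\pi_1(z),\pi_2(z))$ for $z\in R$ is exactly right. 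The one step you should be careful to justify --- and you do flag it --- is that inverse homomorphism preserves the \emph{number} of counters and the nonblind deterministic character of the machine: the closure properties recalled in Section 2 of the paper assert only that the class of counter languages is closed under inverse homomorphism, not that the counter count is preserved. For your particular $\pi_i$, which send each letter of $(\Lambda_{\diamond})^2$ to a single letter of $\Lambda$ or to the empty word, the witnessing construction performs at most one transition of the original machine per input symbol, so determinism and the counter count are visibly preserved; for a general length-increasing homomorphism one would additionally have to fold a bounded burst of counter operations per input letter into the finite control. With that point made explicit, both routes produce the same $(k+l)$-counter acceptor, and your version has the advantage of isolating every subtlety into a finite-state check.
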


We end this section with a lemma describing when certain languages of convolutions are regular; this lemma will be used to streamline many of the proofs in later sections.
\begin{lemma}\label{L:RegularLanguages}
Fix a symbol alphabet $\Lambda$ and let $a_1,a_2,b_1,b_2$ be fixed (possibly empty) words in $\Lambda^*$. Then the following languages are all regular.

\medskip

\begin{enumerate}
\item $L_1 = \{\conv(a_1w,a_1b_1w) \mid w \in \Lambda^*\}$
\item $L_2 = \{\otimes(a_1w,b_1w) \mid w \in \Lambda^*\}$ and $L_3 = \{\otimes(wa_2,wb_2) \mid w \in \Lambda^*\}$
\item $L_4 = \{\otimes(w_1a_1w_2a_2w_3,w_1b_1w_2b_2w_3 \mid \ w_1, w_2, w_3 \in \Lambda^*\}$
\end{enumerate}
\end{lemma}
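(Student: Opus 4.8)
The plan is to prove each case by exhibiting the language as built from regular pieces via the closure of the regular languages under concatenation and Kleene star, together with a uniform ``sliding window'' construction that handles the cases in which the shared free variables sit at a bounded offset in the two coordinates of the convolution. Throughout, write $D=\{\binom{\sigma}{\sigma}:\sigma\in\Lambda\}\subseteq(\Lambda_{\diamond})^2$ for the set of \emph{diagonal} convolution symbols, and note that $D^*$ is regular and that for any fixed words $x,y\in\Lambda^*$ the singleton $\{\otimes(x,y)\}$ is regular.

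The cases with no offset are immediate. For $L_3$ the coordinates $wa_2$ and $wb_2$ share the free prefix $w$ in identical positions, so position by position the convolution consists of $|w|$ diagonal symbols followed by the fixed string $\otimes(a_2,b_2)$; hence $L_3=D^*\cdot\{\otimes(a_2,b_2)\}$, which is regular. The same idea handles $L_2$ whenever $|a_1|=|b_1|$: there the fixed prefixes occupy the same positions, the shared $w$ aligns with no shift, and $L_2=\{\otimes(a_1,b_1)\}\cdot D^*$.

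The remaining situations---$L_1$, the case $|a_1|\neq|b_1|$ of $L_2$, and $L_4$---are exactly those in which a shared free block appears in the top coordinate at a different absolute position than in the bottom coordinate, the displacement being the bounded difference of the lengths of the fixed words that precede it. For these I would construct a finite automaton that reads the convolution left to right while maintaining two finite pieces of data: a pointer recording progress through the fixed words $a_1,b_1,a_2,b_2$ (to verify that they occur in the correct coordinate and position), and a FIFO buffer holding the last $d$ letters of the \emph{leading} coordinate, where $d$ is the current displacement and the leading coordinate is the one in which the free block appears earlier. At each step the machine pushes the newly read leading letter and compares the newly read letter of the other coordinate against the oldest buffered letter, thereby checking that the two copies of each free block spell the same word despite the offset; reading $\diamond$ in the shorter coordinate simply flushes the buffer at the end. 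Because each $|a_i|,|b_i|$ is a fixed constant, the displacement never exceeds $|a_1|+|b_1|+|a_2|+|b_2|$, so the buffer and pointer range over a finite set and the construction is a genuine finite automaton. Allowing the machine to guess nondeterministically where each $w_i$ ends makes the bookkeeping for $L_4$ transparent, and an NFA suffices since it recognizes a regular language.

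The main obstacle is precisely this offset bookkeeping in $L_4$, where the displacement accumulates as the signed quantity $(|b_1|-|a_1|)+(|b_2|-|a_2|)$ across the successive free blocks $w_2$ and $w_3$, with absolute value bounded by $|a_1|+|b_1|+|a_2|+|b_2|$: one must confirm that a single buffer of size bounded by the total length of the fixed words correctly matches each free block against its shifted copy while the fixed blocks are verified simultaneously. Once the sliding-window automaton is set up for the atomic language $\{\otimes(w,bw):w\in\Lambda^*\}$ with $b$ fixed (into which $L_1$ and the unequal-length $L_2$ factor after splitting off a fixed convolution), the general case follows by composing this device with the concatenation arguments above, so the real content is verifying that the buffer never needs to grow with $|w|$.
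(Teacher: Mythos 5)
Your proposal is correct and is essentially the paper's own argument: the paper likewise splits off the fixed convolution prefix, reduces to the atomic language $\{\otimes(w,bw)\}$, and recognizes it with an automaton whose states are indexed by all length-$|b|$ strings over $\Lambda$ — i.e.\ exactly your FIFO buffer of the last $|b|$ letters of the leading coordinate, flushed against the trailing $\diamond$-padded segment at the end. Your treatment of the accumulating signed displacement in $L_4$ is actually more explicit than the paper, which omits cases (2) and (3) as ``similar.''
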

\begin{proof}[Proof of (1)] Since $\{\otimes(a_1,a_1)\}$ is regular and the set of regular languages is closed under concatenation, this problem reduces to recognizing the language
$$L = \{\conv(w,bw)\}$$
with a finite state automaton.

Let $|b| = p$. Enumerate all strings of length $p$ in $\Lambda^*$: $s_1, s_2, s_3, \ldots , s_n$. For each $s_i$, create a state $S_i$ and add a path of $p$ edges from the start state $q_0$ to $S_i$ labeled by the successive letters in $\otimes(s_i, b)$. Next, for each pair $(\lambda_1, \lambda_2) \in \Lambda \times \Lambda$ add an edge from $S_i \to S_j$ labeled $(\lambda_1, \lambda_2)$ if and only if $s_i = \lambda_2 x$ and $s_j = x \lambda_1$ for $x \in \Lambda^*$. Finally, for each $s_i$, construct a path from $S_i$ to an accept state $F$ labeled by $\conv(\epsilon, s_i) = (\diamond^p, s_i)$.

The edge labels ensure that if a prefix of $\otimes(u,v)$ describes a path from the start state to state $S_i$, then the last $p$ letters read from $u$ are exactly the string $s_i$. Reading the next  letter of $\otimes(u,v)$, say ${x \choose y}$, to transition to state $S_j$ means that the final $p$ letters now read from $u$ are $s_j$ and that the initial letter of $s_i$ was $y$.  In this way we check that the string $bw$ contains the string $w$ shifted $p$ places by the insertion of $b$.

Cases (2) and (3) are proven by similar arguments and we omit them here.
\end{proof}

\section{Background on Thompson's group $F$}\label{sec:F}
Thompson's group $F$ can be equivalently viewed from three perspectives:

\medskip

\begin{enumerate}
\item as the group defined by the infinite presentation
$$ {\mathcal P}_{inf}=\langle x_0,x_1, x_2\cdots | x_jx_i = x_ix_{j+1} \text{ whenever } i<j \rangle $$
or the finite presentation
$$ {\mathcal P}_{fin}=\langle x_0,x_1| [x_0^{-1}x_1,x_0^{-1}x_1x_0],[x_0^{-1}x_1,x_0^{-2}x_1x_0^2] \rangle. $$
\item as the set of piecewise linear homeomorphisms of the interval $[0,1]$ satisfying
\begin{enumerate}
\item each homeomorphism has finitely many linear pieces,
\item all breakpoints have coordinates which are dyadic rationals, and
\item all slopes are powers of two.
\end{enumerate}

\medskip

\item as the set of pairs of finite rooted binary trees with the same number of nodes, or carets.
\end{enumerate}
For the equivalence of these three interpretations of this group, as well as a more complete introduction to the group, we refer the reader to \cite{CFP}. If $g=(T,S)$ is an element of $F$ given as a pair of finite rooted binary trees, then the tree $T$ determines the linear intervals in the domain of $g$, and the tree $S$ determines the linear intervals in the range; again see \cite{CFP} for details.

\subsection{Terminology and notation}
In this section we define the terminology that we use in creating the $2$-counter language of normal forms for elements of $F$.  A {\em caret} consists of a vertex in a tree $T$ together with two edges that we draw with a downward orientation.  A vertex in the tree $T$

\medskip

\begin{enumerate}
\item is called a {\em leaf} iff it has valence one;
\item is the {\em root} of the tree iff it has valence two; and
\item has valence three otherwise.
\end{enumerate}

A caret in $T$ has a left edge and a right edge; we refer to a caret $D$ as the {\em left (resp. right) child} of caret $C$ if the vertex of $D$ is the endpoint of the left (resp. right) edge of $C$.  A caret is called {\em exterior} if at least one of its edges lies on the left or right side of the tree, and {\em interior} otherwise.  A caret is {\em exposed} if it has no children.

Let $(T,S)$ denote a pair of finite rooted binary trees with the same number of carets, which necessarily corresponds to a unique element of Thompson's group $F$. We refer to this as a tree pair diagram representing the element. The carets of $T$ and $S$ are numbered in increasing infix order: beginning with the leftmost descendant of the root caret, the left child of any caret is numbered before the caret, and the right child is numbered after it.  We then pair carets with the same infix number from the two trees and refer to a {\em caret pair} with caret number $n$.

\subsection{Reduction criterion}
To ensure a bijective correspondence between elements of $F$ and pairs of finite rooted binary trees with the same number of carets, we impose an additional requirement.  Viewing an element of $F$ as a piecewise linear homeomorphism of the interval, subject to the above conditions, we can always trivially add additional breakpoints within a linear component without altering the function.  However, such breakpoints are superfluous. We define a {\em reduced} tree pair diagram to correspond to a function without unnecessary breakpoints.  Exposed carets with identical infix numbers in both trees introduce a superfluous breakpoint to the analytic representation of a group element.  By removing these unnecessary caret pairs we obtain a reduced tree pair diagram.
An element $g \in F$ is thus represented by an equivalence class of tree pair diagrams with a unique reduced diagram.  When we refer to a tree pair diagram representing a group element, we mean the reduced diagram unless otherwise indicated.

\subsection{Multiplication of tree pair diagrams}\label{sec:Fmult}
Multiplication of elements of $F$ when viewed as piecewise linear homeomorphisms of the interval $[0,1]$ is simply composition of functions; ``multiplication'' of tree pair diagrams is a combinatorial construction which mimics composition of functions.  Consider $g=(T_1,T_2)$ and $h=(S_1,S_2)$ in $F$; create unreduced representatives of both $g$ and $h$, which we denote by $(T'_1,T'_2)$ and $(S'_1,S'_2)$ respectively, so that $T'_2=S'_1$.  The product $hg$ is then represented by the possibly unreduced tree pair diagram $(T'_1,S'_2)$.  For explicit examples of multiplication of elements of $F$ we refer the reader to \cite{CT1}.

Multiplication by the generators $x_0$ and $x_1$ and their inverses induce particular combinatorial rearrangements of a given tree pair diagram which are explicitly described in Section \ref{subsec:x_0multiplier}.

\subsection{Caret labels} \label{sec:carettypes}
In Section \ref{Sec:caret-lang} below we develop a set of normal forms for elements of $F$ using an alphabet consisting of {\em caret types} or {\em labels}.  We give each caret a label based on its position in the tree, and form these strings into pairs by infix order.

There are several existing sets of caret labels in the literature; Fordham in \cite{F} uses caret labels to assign weights to each caret pair.  He then proves that the sum of the weights corresponding to a (reduced) element is exactly its word length with respect to the finite generating set $\{x_0,x_1\}$. Unfortunately, Fordham's caret labels are not in one-to-one correspondence with configurations of carets in a tree, as illustrated in Figure \ref{F:NotUnique}.  Fordham defines a caret to be {\em interior} if neither edge comprising the caret lies on the left or right side of the tree, and further subdivides these carets into type $I_0$, which have no right child, and type $I_R$, which do.  The configurations of carets in Figure \ref{F:NotUnique} both correspond to the string of labels $I_R,I_0,I_0$, where the labels are listed in infix order.

\begin{figure}[h!]
\begin{center}
\begin{tikzpicture}[scale=.65]
		\coordinate (A') at (-.5,1) {};
		\coordinate (A) at (0,0) {};
		\coordinate (B) at (-.5,-1) {};
		\coordinate (C) at (.5,-1) {};
		\coordinate (D) at (0, -2) {};
		\coordinate (E) at (-1, -2) {};
		\coordinate (F) at (-.5,-3) {};
		\coordinate (G) at (.5,-3) {};

		\node (X) at (0, -4) {$I_R, I_0, I_0$};
		\node (X1) at (0,-.75) {$I_0$};
		\node (X2) at (-.5,-1.75) {$I_R$};
		\node (X3) at (0,-2.75) {$I_0$};
			
		\draw (B) -- (A) -- (C);
		\draw (D) -- (B) -- (E);
		\draw (G) -- (D) -- (F);
		\draw[dashed] (A') -- (A);
		
		\coordinate (Q') at (3,1) {};
		\coordinate (Q) at (3.5,0) {};
		\coordinate (R) at (4,-1) {};
		\coordinate (S) at (3,-1) {};
		\coordinate (T) at (4.5, -2) {};
		\coordinate (U) at (3.5, -2) {};
		\coordinate (V) at (3,-3) {};
		\coordinate (W) at (4,-3) {};
		
		\node (Y) at (3.5,-4) {$I_R,I_0,I_0$};
		\node (Y1) at (3.5, -.75) {$I_R$};
		\node (Y2) at (4, -1.75) {$I_0$};
		\node (Y3) at (3.5,-2.75) {$I_0$};
		
		\draw (R) -- (Q) -- (S);
		\draw (T) -- (R) -- (U);
		\draw (V) -- (U) -- (W);
		\draw[dashed] (Q') -- (Q);

    \foreach \point in {A,B,C,D,E, F,G, Q,R,S,T,U,V,W}
        \fill [black] (\point) circle (1.5pt);

\end{tikzpicture}
\caption{Fordham's caret labels are not in bijective correspondence with binary trees: two different interior subtrees corresponding to the same string of labels.  Labels are listed in infix order.} \label{F:NotUnique}
\end{center}
\end{figure} 

We refine Fordham's definition of interior caret types to avoid the above problem, and condense his left and right caret types (comprising carets with an edge on either the left or right side of the tree) into a single exterior type, while distinguishing the root caret.  Define the following caret types within a finite rooted binary tree:

\medskip

\begin{enumerate}
\item the root caret is labeled $``r"$,
\item all other exterior carets are labeled $``e"$,
\item interior carets are labeled as follows:
\begin{table}[ht!]
\begin{center}
\begin{tabular}{|c|c|c|}
  \hline
   & is the left child of an  & is not the left child of an  \\
   Caret $C$ & interior caret or the & interior caret nor the  \\
   & child of an exterior caret & child of an exterior caret \\ \hline
  has a right child & ( & b \\ \hline
  has no right child & a & ) \\
  \hline
\end{tabular}
\end{center}
\end{table}
\end{enumerate}

Listing the caret types of the carets in a binary tree in infix order yields a word over the alphabet $\ST = \{r, e, (, ), a,b\}$.  For example, the tree in Figure~\ref{F:IntTree2} is encoded by the word $eea()(ab)raee$.

\begin{figure}[ht!]
\begin{center}
\begin{tikzpicture}
  [sibling distance = 14mm,
   level distance=16mm,
   level 1/.style={sibling distance=64mm},
   level 2/.style={sibling distance=32mm},
   level 3/.style={sibling distance=24mm},
   level 4/.style={sibling distance=12mm},
   level 5/.style={sibling distance=8mm},
   level 6/.style={sibling distance=4mm},
   every node/.style={circle,inner sep=1.5pt}]

\coordinate node [circle,draw] {$r$}
	child {node [circle,draw]{$e$}
		child{node [circle,draw]{$e$}
			child{node [circle,inner sep=.75pt,draw,fill] {}}
			child{node [circle,inner sep=.75pt,draw,fill] {}}
		}
		child{node [circle,draw]{$($}
			child{node [circle,inner sep = 1pt, draw]{$($}
				child {node [circle,inner sep = 1pt, draw]{$a$}
					child{node [circle,inner sep=.75pt,draw,fill] {}}
					child{node [circle,inner sep=.75pt,draw,fill] {}}
				}
				child	{node [circle,inner sep = 1pt, draw]{$)$}
					child{node [circle,inner sep=.75pt,draw,fill] {}}
					child{node [circle,inner sep=.75pt,draw,fill] {}}
				}
			}
			child{node [circle,inner sep = 1.25pt,draw]{$b$}
				child {node [circle,inner sep = 1pt, draw]{$a$}
					child{node [circle,inner sep=.75pt,draw,fill] {}}
					child{node [circle,inner sep=.75pt,draw,fill] {}}
				}
				child {node [circle,inner sep = 1pt, draw]{$)$}
					child{node [circle,inner sep=.75pt,draw,fill] {}}
					child{node [circle,inner sep=.75pt,draw,fill] {}}
				}
			}
		}
	}
	child {node [circle,draw]{$e$}
		child {node [circle,draw]{$a$}
			child{node [circle,inner sep=.75pt,draw,fill] {}}
			child{node [circle,inner sep=.75pt,draw,fill] {}}
		}
		child{node [circle,draw]{$e$}
			child{node [circle,inner sep=.75pt,draw,fill] {}}
			child{node [circle,inner sep=.75pt,draw,fill] {}}
		}
	}
	;
\end{tikzpicture} \linebreak
\setlength{\parindent}{.1 in}
\caption{An example of a tree corresponding to the string of labels $eea()(ab)raee$.} \label{F:IntTree2}
\end{center}
\end{figure} 

These caret labels are easily translated into those used by Elder, Fusy and Reichnitzer in \cite{Murray} (which are based on diagrams in \cite{BB});  their labels mark only interior subtrees. Leaves are labeled ``N'' or ``I'' and carets are labeled ``n'' or ``i'' as in Figure~\ref{F:Labels1}.

\begin{figure}[ht!]
\begin{center}
\begin{minipage}[b]{0.15\linewidth}
\centering
\begin{tikzpicture}
  [level distance=10mm, sibling distance = 14mm,
   every node/.style={circle,inner sep=1.5pt}]

\coordinate node [circle,fill,draw] {}
	child {node [circle,draw]{N}}
	child [missing]
	;
\end{tikzpicture}
\end{minipage}
\begin{minipage}[b]{0.15\linewidth}	
\centering	
	\begin{tikzpicture}
	  [level distance=10mm, sibling distance = 14mm,
   every node/.style={circle,inner sep=1.5pt}]

\coordinate node [circle,fill, draw] {}
	child [missing]
	child {node [circle,draw]{I}}
	;
\end{tikzpicture}
\end{minipage}
\begin{minipage}[b]{0.15\linewidth}	
\centering	
	\begin{tikzpicture}
	  [level distance=5mm, sibling distance = 7mm,
   every node/.style={circle,inner sep=1pt}]
\setlength{\parindent}{.14 in}

\indent \coordinate node [circle,fill, draw] {}
	child {node [circle, draw] {n}
		child {node [circle,fill, draw] {}}
		child {node [circle,fill, draw] {}}
	}
	child [missing]
	;
\end{tikzpicture}

\end{minipage}
\begin{minipage}[b]{0.15\linewidth}	
\setlength{\parindent}{.12 in}
 \indent \centering	
	\begin{tikzpicture}
	  [level distance=5mm, sibling distance = 7mm,
   every node/.style={circle,inner sep=1pt}]

\coordinate node [circle,fill, draw] {}
	child [missing]
	child {node [circle, draw] {i}
		child {node [circle,fill, draw] {}}
		child {node [circle,fill, draw] {}}
	}
	;
\end{tikzpicture}

\end{minipage}
\caption{Leaf and caret labels used in \cite{Murray}.  Leaves are labeled ``N" or ``I" and carets are labeled ``n" or ``i." By convention, the root caret is labeled ``n" and the left-most leaf is not labeled.} \label{F:Labels1}
\end{center}
\end{figure}

There is a straightforward translation between the notation in \cite{Murray} and our own:
\begin{enumerate}
\item If a caret is labeled ``n,'' the caret is a left child or else the root caret;
\item if the caret is labeled ``i,'' then the caret is a right child.
\item If the leaf directly to the right of a caret is labeled ``N,'' then the caret has a right child; if the leaf directly right of a caret is labeled ``I,'' then the caret does not.
\end{enumerate}
Following these rules, replace any appearance of ``nN'' with ``$($'', ``iI'' with ``$)$'', ``nI'' with ``$a$'', and ``iN'' with ``$b$''.

\section{A 2-counter language of normal forms for elements of $F$}
\label{Sec:caret-lang}

We now define a language $\F$ of normal forms for elements of $F$ based on the caret types defined in Section \ref{sec:carettypes} which is accepted by a nonblind deterministic $2$-counter automaton.  This forms the basis of a
$3$-counter graph automatic structure for $F$; the additional counter is required to construct the multiplier automaton for $x_1$.

\begin{theorem} \label{T:GCS}
Thompson's group $F$ is nonblind deterministic $3$-counter-graph automatic with quasigeodesic normal form with respect to the generating set $S = \{x_0, x_1\}$ and the symbol alphabet $\ST = \{e,r, (, ),a,b\}$ of caret types.
\end{theorem}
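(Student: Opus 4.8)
The plan is to verify directly the three requirements of Definition \ref{def:Cgraph}: exhibit a normal form language $\F$ in the $3$-counter class that is in bijection with $F$, show it is quasigeodesic, and construct for each $x\in\{x_0^{\pm1},x_1^{\pm1}\}$ a $3$-counter automaton accepting the multiplier language $L_x=\{\conv(u,v)\mid u,v\in\F,\ \ov=_F\ou x\}$.

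First I would define $\F$. Given a reduced tree pair diagram $(T,S)$ representing $g\in F$, read off the caret types of $T$ and of $S$ in infix order to obtain words $u_T,u_S\in\ST^*$ of equal length (since $T$ and $S$ have the same number of carets), and take the normal form of $g$ to be $\conv(u_T,u_S)$. That this is a genuine normal form rests on two facts from Section \ref{sec:carettypes}: the refined caret labels are in bijection with caret configurations in a tree (the very point of refining Fordham's labels, cf.\ Figure \ref{F:NotUnique}), so $u_T$ determines $T$; and reduced tree pair diagrams are in bijection with $F$. To see $\F$ is $2$-counter (hence $3$-counter), I would use one counter to certify that $u_T$ is the infix code of an honest finite rooted binary tree—the matching of the interior symbols $($ and $)$ behaves like a single bracket type and is tracked by counter depth, while the placement of $r,e,a,b$ is a regular side condition, and the intersection of a $1$-counter language with a regular language is again $1$-counter—and a second counter to do the same for $u_S$. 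Requiring that no $\diamond$ appear in the convolution is regular and forces $|u_T|=|u_S|$, and the reducedness condition (no caret pair exposed in both trees) is a condition on the two label strings that the finite-state control can enforce alongside the counters, since exposed carets are recognizable from the caret-type string together with bounded context.

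Next I would establish the quasigeodesic bound. The length of the normal form of $g$ equals the number $n$ of carets of $T$. Since multiplication by any single generator alters a reduced tree pair by a bounded number of carets—precisely the bounded local rearrangement recalled in Section \ref{subsec:x_0multiplier}—an element of word length $\ell$ has at most $c\ell+c$ carets for a uniform constant $c$; thus $|u|_{\ST}=n\le D(\|g\|_X+1)$, which is the required inequality. One could alternatively invoke Fordham's weight formula to obtain the sharper, indeed geodesic, statement.

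Finally, and this is where the real work lies, I would build the four multiplier automata. Section \ref{subsec:x_0multiplier} describes multiplication by $x_0^{\pm1}$ and $x_1^{\pm1}$ as explicit bounded combinatorial rearrangements of the tree pair; the task is to recognize, reading $\conv(u,v)$ synchronously, that $v$ is the normal form obtained from $u$ by the relevant rearrangement. The portions of the two trees left untouched contribute blocks in which one label string is a fixed-offset copy of the other, and these are handled by the regular languages of Lemma \ref{L:RegularLanguages}; the tree-validity checks contribute the counters as above, assembled via Lemma \ref{lemma:convolution}. The main obstacle is the $x_1$ multiplier: unlike $x_0$, multiplication by $x_1$ can change the \emph{caret types} of an entire chain of carets far from the site of the structural modification—a caret's label depends on whether it is exterior or interior and on its left-child status, and the rearrangement can flip the interior/exterior status (hence relabel $e\leftrightarrow a,b,(,)$) along a spine of unbounded length. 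Certifying that this cascade of relabelings in $u$ matches $v$ correctly, while simultaneously matching up corresponding carets that now sit at shifted positions, is what forces the count past the two counters needed merely to validate the trees, yielding the third counter and the $3$-counter structure asserted in the statement.
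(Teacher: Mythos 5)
Your overall architecture---a $2$-counter normal form $\F$ of convolved caret-type strings, a quasigeodesity bound, and counter automata for the multipliers---is the same as the paper's, and your normal-form and quasigeodesic arguments are sound (your bound ``each generator multiplication changes the reduced diagram by a bounded number of carets, so $n\le c\ell+c$'' is in fact a more elementary route than the paper's appeal to Fordham's weight formula). The gap is in the $x_1$ multiplier, which is the entire source of the third counter and hence the heart of the theorem, and your sketch of it rests on a false premise. You assert that multiplying by $x_1$ can flip the interior/exterior status ``along a spine of unbounded length,'' so that an unbounded cascade of relabelings must be certified. That is not what happens: the rotation in Figure~\ref{F:Rotation2} changes the type of \emph{at most two} carets---the parent $d$ of the subtrees $B$ and $C$, and (when $C\neq\emptyset$) the root $f$ of $C$---and in the generic Case 5 the strings $\nu(g)$ and $\nu(gx_1)$ have equal length and differ in exactly those two positions. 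If your description were accurate, it would not be at all clear that a single extra counter could verify an unbounded, position-shifted relabeling, so the claim that this ``forces the count past two, yielding the third counter'' is an assertion rather than an argument.

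The idea you are missing is \emph{where} the difficulty actually lies. The changed symbol at $d$ sits immediately after the block of interior labels following the unique $r$ in the top line, so a finite-state control can find it; but the changed symbol at $f$ sits at the root of the subtree $C$, i.e.\ at the position inside the interior string $\gamma_C=\eta_1\bar{f}\eta_2$ where the bracket depth contributed by $\eta_1$ first returns to zero. A finite-state machine cannot locate that position, and the paper's machine (Figure~\ref{fig:case5}) devotes the third counter precisely to tracking the depth of the symbols $($ and $)$ so as to certify that the unique interior position where $u$ and $v$ disagree is exactly the root of $C$, with the correct label change $)\mapsto a$ or $b\mapsto($, the counter being zero there and strictly positive throughout $\eta_2$. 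The remaining cases, in which carets are added or deleted at the end of the string or directly after the root caret, are regular differences handled as you describe for $x_0$. Without the case analysis isolating this situation and the depth-counter device for locating $f$, you have not actually constructed $L_{x_1}$ and have not justified that three counters suffice.
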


This theorem is proved in the subsequent sections.  Moreover, we show that the language of normal forms defined over this symbol alphabet can never be accepted by a $1$-counter automaton.

\subsection{Word Acceptor}

The language $\F$ is constructed in stages, always taking the symbol alphabet to consist of the caret labels $\{e,r,(,),a,b\}$.

\begin{enumerate}
\item We first define a $1$-counter language $\LI$ containing all strings that uniquely describe possible interior subtrees of a finite rooted binary tree. Exterior caret labels are not used.
\item Next we extend $\LI$ to a 1-counter language $\LT$  whose strings uniquely describe finite rooted binary trees.
\item We define a $2$-counter language $L_T$ which is the intersection of $\otimes(\LT,\LT)$ with a regular language ensuring that no $\diamond$ symbols appear in any convolutions.  This language describes all pairs of finite rooted binary trees with the same number of carets.
\item Finally, we check that strings in $\LTT$ correspond to reduced pairs of binary trees by intersecting with a regular language $R$ which checks a simple reduction criterion. Then $\F = \LTT \cap R$.
\end{enumerate}

\subsubsection{A language describing interior subtrees}\label{SSS:InteriorSubtrees}
In \cite{Murray} a simple criterion is given for when a string of caret labels consisting of $``N'',``I'',``n''$ and $``i''$ (as defined in Figure \ref{F:Labels1}) represents an interior subtree of a finite rooted binary tree.  We restate this lemma using the alphabet $\SI = \{(,),a,b\}$ of interior caret labels. A translation between the two sets of labels is given in Section \ref{sec:carettypes}.

\begin{lemma} [Lemma 6 of \cite{Murray}] \label{L:Murray}
Let $ w = s_1 \cdots s_n$ be a word in $(\SI)^*$. Let $w_j$ denote the prefix $s_1 \cdots s_j$ and let $|u|_y$ denote the number of occurrences of $``y''$ in $u$. Then $w$ encodes a (possibly empty) interior subtree if and only if the following conditions hold:
\begin{enumerate}
\item For any $0 \leq j < n$ such that $|w_j|_( = |w_j|_)$, $s_{j+1} \in \{(,a\}$;
\item $|w|_( = |w|_)$.
\end{enumerate}
\end{lemma}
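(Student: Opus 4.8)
The plan is to prove the lemma by exhibiting the map that sends an interior subtree to the infix reading of its caret labels as a \emph{bijection} onto the set of words satisfying (1) and (2); this is essentially Lemma 6 of \cite{Murray} transported through the dictionary of Section \ref{sec:carettypes}, but I would give a self-contained inductive argument. Throughout I would abbreviate $h_j := |w_j|_{(} - |w_j|_{)}$, so that (2) reads $h_n = 0$ and (1) says that whenever $h_j = 0$ with $j<n$ the next letter lies in $\{(,a\}$. The feature that must be accommodated is the left/right asymmetry built into the labels: a caret carries a left-column label in $\{(,a\}$ exactly when it is a left child (or sits directly under an exterior caret) and a right-column label in $\{b,)\}$ exactly when it is a right child. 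Consequently the infix word of a subtree depends on whether its root is attached as a left or as a right child, so a single class of words cannot be closed under the obvious subtree recursion.

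To handle this I would introduce \emph{two} encodings. For a (possibly empty) binary tree $U$ let $\phi(U)$ be its infix label word when the root is read as a left child (the standalone interior-subtree encoding), and let $\psi(U)$ be the word obtained when the root is instead read as a right child; these differ only in the single letter recording the root, via $(\leftrightarrow b$ and $a\leftrightarrow\,)$. Reading in infix order gives the parallel recursions $\phi(U)=\phi(U_L)\,c(U)\,\psi(U_R)$ and $\psi(U)=\phi(U_L)\,c'(U)\,\psi(U_R)$, where $U_L,U_R$ are the left and right subtrees, $c(U)\in\{(,a\}$ and $c'(U)\in\{b,)\}$ record the root, and crucially the left subtree is always encoded by $\phi$ while the right subtree is always encoded by $\psi$. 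The empty tree and the empty word furnish the base cases.

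The forward direction (a subtree's word satisfies (1) and (2)) I would prove by induction, simultaneously establishing the companion fact that for nonempty $U$ the word $\psi(U)$ has all proper prefixes at height $\ge 0$ and total height exactly $-1$, its single unmatched opening being supplied by the parent. Granting this, in $\phi(U)=\phi(U_L)\,c(U)\,\psi(U_R)$ the block $\phi(U_L)$ is balanced and nonnegative by induction, the root letter $c(U)$ lies in $\{(,a\}$ as required, and when $c(U)=($ the trailing $\psi(U_R)$ descends from height $1$ to $0$, reaching $0$ only at its final letter. Hence the height of $\phi(U)$ returns to $0$ only at genuine break points inside or just after $\phi(U_L)$, each followed by a left-column letter, which is precisely (1); and (2) is the net height being $0$.

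For the converse I would run a simultaneous strong induction on length, reconstructing the tree from the word. A nonempty word satisfying (1) and (2) must end in $a$ or $)$: if it ends in $a$, this letter is the root with empty right subtree and deleting it leaves a shorter word of the same type; if it ends in $)$, the root $($ is the letter following the last index $p<n$ with $h_p=0$, splitting the word as $\phi(U_L)\,(\,\psi(U_R)$, where the prefix again satisfies (1) and (2) and the suffix has nonnegative proper-prefix heights with net $-1$. This latter ``right-subtree'' class $\mathcal{E}$ I would characterise cleanly as exactly the nonempty words with $h_j\ge 0$ for $j<n$ and $h_n=-1$, and invert by locating its root as the \emph{first} height-$0$ index whose following letter lies in $\{b,)\}$: everything before it is forced into the left-column class by minimality, hence is a valid $\phi$-prefix, and everything after it is a shorter $\mathcal{E}$-word. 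Matching the $\phi$- and $\psi$-inductions then yields the bijection, and the lemma. The main obstacle is exactly this asymmetry: a naive single-bracketing Dyck argument fails because $b$ and $)$ behave differently from $($ and $a$ at height $0$ and a right subtree is never encoded by a word in the target class, so isolating the auxiliary class $\mathcal{E}$, proving the $\phi$ and $\psi$ statements in tandem, and using the correct asymmetric rule for locating the root during reconstruction is the technical heart of the proof.
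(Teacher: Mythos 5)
Your argument is correct, but it is doing considerably more than the paper does: the paper offers no proof of this lemma at all. It imports Lemma 6 of \cite{Murray} verbatim and merely translates the alphabet via the dictionary $nN\mapsto ($, $iI\mapsto )$, $nI\mapsto a$, $iN\mapsto b$ given at the end of Section \ref{sec:carettypes}; the only thing the paper verifies independently is injectivity of the encoding, and that is done by the exhaustive adjacent-pair chart in the Appendix rather than by any height argument. Your proof is therefore a genuinely different route: a self-contained structural induction whose technical core --- the mutual recursion $\phi(U)=\phi(U_L)\,c(U)\,\psi(U_R)$, $\psi(U)=\phi(U_L)\,c'(U)\,\psi(U_R)$ together with the auxiliary class $\mathcal{E}$ of right-subtree words (proper-prefix heights $\geq 0$, net height $-1$) --- correctly isolates the asymmetry between the left-column letters $\{(,a\}$ and the right-column letters $\{b,)\}$ that makes a naive Dyck-word argument fail. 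I checked the delicate points and they hold: condition (1) plus (2) forces all heights nonnegative, hence a word can only end in $a$ or $)$; the root of an $\mathcal{E}$-word is indeed the \emph{first} height-$0$ index followed by a letter in $\{b,)\}$, since minimality forces every earlier height-$0$ index to be followed by a left-column letter, making the prefix a valid $\phi$-word; and when that root letter is $)$ the nonnegativity of proper prefixes forces it to be the final letter. What your approach buys is a proof that needs no external reference and that simultaneously yields the bijectivity of $\tau_{int}$ (each word determines a unique tree, since every decomposition step is forced), subsuming the Appendix chart; what the paper's route buys is brevity, at the cost of leaning on \cite{Murray} and a case check. If you write this up, the one thing to state explicitly rather than leave implicit is the base case that the empty word encodes the empty subtree, which the lemma's ``possibly empty'' clause requires.
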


Let $\LI$ denote the set of all strings satisfying the conditions in Lemma \ref{L:Murray} (including the empty string). Then $\LI$ is a nonblind deterministic $1$-counter language accepted by a machine $\texttt{M}_{int}$, where the input is read from left to right and the counter instructions are as follows:
\begin{enumerate}
\item the counter increments by $1$ after reading $``(''$;
\item read $``)''$ only when the value of the counter is non-zero, and subsequently decrement the counter by $1$;
\item proceed to a fail state if the counter is zero and $``b''$ is read.
\end{enumerate}

These counter instructions correspond exactly to the following arrangement of carets in a binary tree.  A caret with label $``(''$ corresponds to the root caret of an interior subtree with a nonempty right subtree; there will be a corresponding caret labeled $``)''$ which is the final caret (in infix order) of this subtree.

As every element in $\LI$ corresponds to a possible interior subtree of a finite rooted binary tree, we can formalize this identification by defining  a mapping $\tau_{int}: \LI \to \{\textrm{interior subtrees}\}.$  To show that $\tau_{int}$ is well-defined we must verify that any word $w \in \LI$ encodes a unique interior subtree. This is easily checked by  considering all combinations of pairs of caret labels in $\SI = \{(,),a,b\}$ and verifying that each combination precisely defines where the corresponding carets must be positioned in the interior subtree.  A chart verifying this is included in the Appendix.  If $\alpha \in L_{int}$ is a string of caret types from $\SI$, we build a unique interior subtree as follows.  Each adjacent pair of caret types beginning on the left side of the string uniquely determines the relative placement of the carets in the subtree being constructed as shown in Figure \ref{F:InteriorCaretPlacement}.  Hence $\tau_{int}$ is a bijection between the set of possible interior subtrees of a finite rooted binary tree and the set of words in the language $\LI$.

\subsubsection{A language describing binary trees}

Next we construct a language $\LT$ over the alphabet $$\ST = \{e,r,(,),a,b\}$$ which accepts exactly those words corresponding to finite rooted binary trees. Let $\texttt{M}_{tree}$ denote the machine which accepts $\LT$.  A binary tree containing no interior carets will correspond to a word of the form $e^{n_1}re^{n_2}$, where $n_1,n_2 \in \N \cup \{0\}$.  In general, a binary tree has interior subtrees attached to any collection of exterior carets.  Thus, a string $w$ in $\SI^*$ describing such a tree must satisfy the following  conditions:
\begin{enumerate}
\item $w$ begins with either $``e''$ or $``r''$,
\item $w$ ends with either $``e''$ or $``r''$,
\item $w$ contains exactly one $``r''$, and
\item for any substring $s_{i-1} s_i \cdots s_j s_{j+1}$, if $s_{i-1}, s_{j+1} \in \{e,r\}$ and $s_i \cdots s_j \in \SI^*$, then $s_i \cdots s_j$ is a word in $\LI$.
\end{enumerate}
Let $\LT$ be the set of words satisfying conditions (1) through (4). Since $\LI$ is a 1-counter language, condition (4) can be checked with a single counter, as follows. Note that substrings $s_{i-1} s_i \cdots s_j s_{j+1}$ and $s_{k-1} s_k \cdots s_l s_{l+1}$ may overlap only at the first and final letters, which lie in $\{e,r\}$. Upon reading $``e''$ or $``r''$ with a subsequent letter from $\Sigma_{int}$, a machine scanning the word simulates the machine for $L_{int}$ on the substring $s_i \cdots s_j s_{j}$, and repeats this process when it encounters the next letter from $\{e,r\}$.

The three remaining conditions are easily be recognized with finite state automata. Hence $\LT$ is a nonblind deterministic $1$-counter language.

We can now extend the function $\tau_{int}: \LI \to \{\textrm{interior subtrees}\}$ to a bijection
$$\tau_{tree} : \LT \to \{\textrm{finite rooted binary trees}\}.$$

\subsubsection{A language describing pairs of binary trees}
To describe pairs of binary trees with the same number of carets, we use the convolution of two strings from $\LT$ which are the same length.  Namely, $\LTT$ is the intersection of $\otimes(\LT,\LT)$  with a regular language of convolutions of elements of $\ST^*$ containing no $\diamond$ symbols.  It follows from Lemma \ref{lemma:convolution} that $\LTT$ is a $2$-counter language.  Analogously, there is a bijection $$\tau: \LTT \rightarrow \{\textrm{pairs of finite rooted binary trees}\}.$$ If $\otimes(u,v) \in \LTT$ then the top line in the convolution determines the first tree in the pair, and the bottom line the second tree in the pair.  Let $M_{T}$ denote the machine which accepts $\LTT$.

\subsubsection{Reduction Criterion} Next we test whether a string in $\LTT$ corresponds to a reduced pair of trees.  We first observe sequences of labels which must appear in a string corresponding to a tree pair diagram that is unreduced. For each sequence, we intersect $\LTT$ with the complement of the corresponding regular language, using the fact that the set of regular languages is closed under complementation.

Given a finite rooted binary tree $T$, it is easily checked that:

\medskip

\begin{enumerate}
\item the first exterior caret of $T$ is exposed iff the word encoding it begins with $``ee''$ or $``er''$;
\item the last exterior caret of $T$ is exposed iff the word encoding it ends with $``re''$ or $``ee''$;
\item a symbol $``y''$ encodes an exposed interior caret of $T$ iff $y \in \{),a\}$ and is preceded by some  $x \in \{e,r,(,b\}$.  Recall that $``)''$ and $``a''$ are the two interior caret types which label carets that have no right child.
\end{enumerate}

There are three ways that a pair of binary trees representing a non-identity element of $F$ may be unreduced:
\begin{enumerate}
\item the first exterior caret of each tree is exposed; this occurs iff the word in $\LTT$ describing the pair of trees begins with ${e \choose e}{y_1 \choose y_2}$  for $y_i \in \{e,r\}$.
\item the last exterior caret of each tree is exposed; this occurs iff the word in $\LTT$ describing the pair of trees ends with: ${ x_1 \choose x_2} {e \choose e}$ for $x_i \in \{e,r\}$.
\item each tree has an exposed interior caret with infix number $n$; this occurs iff the word in $\LTT$ describing the pair of trees contains any of the following strings: ${ x_1 \choose x_2}{ y_1 \choose y_2}$ for $x_i \in \{e,r,(,b \}$ and $y_i \in \{),a \}$, where the $y_i$ represent the exposed interior carets.
\end{enumerate}

Each of the above cases can be detected with an finite state machine. Let $R$ denote the set of all strings that {\it do not} contain any of these sequences. Then $R$ is regular and, moreover, $R$ consists of exactly those strings that do not fail any of the three reduction criteria.  Let $\F = \LTT \cap R$. Since each word in $\LTT$ encodes a pair of binary trees with the same number of carets, each word in $\F$ encodes a reduced pair of binary trees, that is, a unique element of $F$. Furthermore, because $\LTT$ is a $2$-counter language, it follows that $\F$ is as well.

We can now restrict the bijection $\tau$ to obtain another bijection $$\tau_{red}: \F \rightarrow \{\textrm{reduced pairs of finite rooted binary trees}\}.$$  Thus we have shown that $\F$ is a $2$-counter language which is a normal form for Thompson's group $F$.

Let $\nu = \tau^{-1}$.  If $(T,S)$ is a pair of binary trees, then $\nu(T,S)$ is the string in $\LTT$ representing the caret types from the pair.  We abuse this notation in several ways.  If $g = (T,S)$, we write $\nu(g)$ interchangeably with $\nu(T,S)$.  If $g=(T,S)$ is reduced, then we understand that $\nu(g)=\tau^{-1}(g) = \tau_{red}^{-1}(g)$.  Thus in this case $\nu(g) \in \F$.

We now show that the language $\F$ is a quasigeodesic normal form for $F$.

\begin{proposition}\label{prop:fordham}
The normal form language $\F$ for Thompson's group $F$ is quasigeodesic.
\end{proposition}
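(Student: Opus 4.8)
The plan is to combine the structure of the normal form with Fordham's length formula from \cite{F}. Fordham assigns to each caret pair in a reduced tree pair diagram a nonnegative integer weight $w_F$, depending only on the types of the two carets comprising the pair, and proves that the word length $||g||_{\{x_0,x_1\}}$ of the element $g$ equals the sum of these weights over all caret pairs. I would take this as the main external input.

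First I would record that, for $g \in F$ with $u = \nu(g) \in \F$, the symbol length $|u|_{\ST}$ equals the number $N$ of caret pairs in the reduced diagram of $g$. Indeed $u = \conv(w_1,w_2)$ with $w_1, w_2 \in \LT$ encoding the two trees; since these trees have the same number of carets, $w_1$ and $w_2$ have equal length and no $\diamond$ occurs in $u$, so $|u|_{\ST} = N$. Next, Fordham's formula gives $||g||_{\{x_0,x_1\}} = \sum w_F \geq N - z(g)$, the sum taken over all caret pairs, where $z(g)$ denotes the number of caret pairs of weight $0$. Hence it suffices to produce a constant $B$, independent of $g$, with $z(g) \leq B$ for every reduced diagram: then $N \leq ||g|| + B \leq (B+1)(||g||+1)$, and $\F$ is quasigeodesic with $D = B+1$ (the additive $1$ also covering the identity, whose normal form is empty).

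The crux, and the step I expect to be the main obstacle, is the bound $z(g) \leq B$. Using the dictionary of Section \ref{sec:carettypes} between our labels $\{r,e,(,),a,b\}$ and Fordham's caret types, I would read off from Fordham's weight table precisely which pairs of caret types are assigned weight $0$, and then argue that such configurations can appear only a bounded number of times in a reduced tree pair diagram. The point to be careful about is excluding unboundedly many weight-zero pairs along a long common initial (left) spine of the two trees; the examples $x_i$, whose reduced diagrams carry left spines of length growing with $i$ while $||x_i|| = 2i-1$ grows linearly, indicate that these exterior caret pairs must in fact carry positive weight, so that weight-zero pairs are confined to a bounded set of positions (essentially the leftmost and rootmost carets). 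Verifying this confinement rigorously from the table is the technical heart of the proof.
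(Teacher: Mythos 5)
Your overall strategy is sound and rests on the same external input as the paper, which disposes of the proposition in two sentences by citing the coarse equivalence between the caret number $N(g)$ and the word length $||g||_{\{x_0,x_1\}}$ as a consequence of Fordham's work; you are rederiving the one inequality actually needed, $N(g) \leq ||g|| + B$, from Fordham's exact weight formula. Two comments. First, the step you defer really is the crux, and your guess about where the zero-weight pairs live is slightly off: pairs along a common left spine have type $(L_L,L_L)$ and carry positive weight, so the scenario you worry about cannot arise; the zero-weight entries of Fordham's table involve only the type $L_0$ (carried exclusively by the infix-first caret of each tree) and the type $R_0$ (carried only at the right end of the tree, where a $\big(R_0,R_0\big)$ pairing would force the final caret pair to be exposed in both trees, contradicting reducedness). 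So the weight-zero pairs are confined to the first and last infix positions, $z(g)$ is bounded by $2$, and your argument closes --- but verifying this requires working through Fordham's type definitions and table carefully, which is more labor than the statement deserves. Second, and more useful: for the one-sided bound that the definition of quasigeodesic actually demands, Fordham's fine formula is overkill. Multiplying a reduced tree pair diagram by $x_0^{\pm 1}$ changes the number of carets by at most $1$ and by $x_1^{\pm 1}$ by at most $2$, so $N(g) \leq 2\,||g||_{\{x_0,x_1\}} + 1$, and $|u|_{\ST} = N(g) \leq 2(||u||_X + 1)$ gives $D=2$ immediately, with no analysis of weights at all. Finally, a small correction: in this paper the identity does not have an empty normal form --- it is the single symbol $\conv(r,r)$ --- which is precisely why the definition of quasigeodesic uses $||u||_X + 1$ rather than $||u||_X$; your constant should be checked against that convention rather than against an empty word.
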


\begin{proof}
It follows from work of Fordham \cite{F} that the number of carets in either tree in a reduced tree pair diagram for $g \in F$ is coarsely equivalent to the word length of $g$ with respect to the finite generating set $\{x_0,x_1\}$.  The length of an element of $\F$ is exactly the number of carets in either tree in the reduced tree pair diagram representing $g \in F$, hence  ${\mathcal F}$ is a quasigeodesic normal form.
\end{proof}

We next show that the language ${\mathcal F}$ of normal forms is not a context-free language.  Since $1$-counter languages form a subset of the set of context-free languages it follows that we can not improve on Theorem \ref{T:GCS}.  We prove Lemma \ref{L:NotCF} using Ogden's Lemma, which is a corollary to the standard pumping lemma for context-free languages, and is stated below.

\begin{lemma}[Ogden's Lemma, \cite{HU} lemma 6.2]\label{L:Ogden}
Let $L$ be a context free language.  Then there is a constant $p$ so that if $z$ is any word in $L$, and we mark any $p$ or more positions of $z$ as {\em distinguished}, then we can write $z=uvwxy$ so that
\begin{enumerate}
\item $v$ and $x$ together have at least one distinguished position,
\item vwx has at most $p$ distinguished positions, and
\item for all $i \geq 0$ we have $uv^iwx^iy \in L$.
\end{enumerate}
\end{lemma}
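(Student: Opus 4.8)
The plan is to adapt the standard parse-tree proof of the pumping lemma for context-free languages, refining the path-selection step so that it is governed by the distinguished positions rather than by string length. First I would fix a context-free grammar $G$ generating $L$ and put it in Chomsky normal form, so that every production has the form $A \to BC$ or $A \to a$; let $k$ be the number of nonterminals of $G$ and set $p = 2^{k+1}$. Given a word $z \in L$ with at least $p$ distinguished positions, I would fix a parse tree $T$ for $z$ and descend from the root along a single path, at each internal node with two children steering toward the child whose subtree contains more distinguished leaves (ties broken arbitrarily), and calling a node on this path a \emph{branch point} when both of its children's subtrees contain a distinguished leaf.

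The key quantitative step is to bound the number of distinguished leaves beneath a node by a power of two in the number of branch points at or below it on the path. Between consecutive branch points the distinguished count is preserved (only one child carries any distinguished leaves), while at each branch point the greedy choice retains at least half of them; hence the count of distinguished leaves beneath any node $N$ is at most $2^{m}$, where $m$ is the number of branch points on the path at or below $N$. Applied at the root, where the count is at least $p = 2^{k+1}$, this forces at least $k+1$ branch points on the path. I would then take the lowest $k+1$ branch points and invoke the pigeonhole principle on their nonterminal labels to obtain two of them, $N_1$ (higher) and $N_2$ (lower), carrying the same nonterminal $A$.

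From this repeated nonterminal the pumping decomposition follows in the usual way: writing $z = uvwxy$ where $w$ is the yield of $N_2$, where $vwx$ is the yield of $N_1$, and where $u,y$ are the surrounding portions, the derivations $A \Rightarrow^* vAx$ and $A \Rightarrow^* w$ give $A \Rightarrow^* v^iwx^i$ and hence $uv^iwx^iy \in L$ for all $i \geq 0$, establishing condition (3). Condition (1) holds because $N_1$ is a branch point, so the child of $N_1$ not lying on the path contributes a distinguished leaf, which must land in $v$ or in $x$. Condition (2) holds because $N_1$ sits among the lowest $k+1$ branch points, so at most $k+1$ branch points lie at or below it, and the count bound of the previous paragraph gives at most $2^{k+1} = p$ distinguished leaves in its yield $vwx$.

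The main obstacle is the bookkeeping around the distinguished positions. The ordinary pumping lemma selects a long root-to-leaf path and repeats a nonterminal by length alone, whereas here the path and the pumping pair must be chosen so that conditions (1) and (2) hold simultaneously: that $v$ and $x$ jointly capture a distinguished position while $vwx$ does not capture too many. Reconciling these two demands is exactly what the branch-point notion and the selection of the lowest $k+1$ branch points accomplish, and verifying the halving estimate that yields the constant $p = 2^{k+1}$ is the technical heart of the argument.
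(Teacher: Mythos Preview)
Your argument is the standard and correct proof of Ogden's Lemma: the greedy path guided by distinguished positions, the notion of branch points, the doubling bound $\#\{\text{distinguished leaves under }N\}\le 2^{m}$ with $m$ the number of branch points at or below $N$, and the pigeonhole step on the lowest $k+1$ branch points all go through exactly as you describe and yield conditions (1)--(3).

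There is nothing to compare against, however: the paper does not prove this lemma. It is quoted verbatim as Lemma~6.2 of \cite{HU} and invoked as a black box in the proof that $\mathcal F$ is not context-free. So your write-up supplies a proof where the paper simply cites one; what you have is precisely the argument one finds in Hopcroft--Ullman.
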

Ogden's Lemma allows us to omit the first letter of any string in the following proof from the substrings $v$ and $x$ in the application of the lemma, an assumption one cannot make when applying the standard pumping lemma for context-free languages.
\begin{lemma} \label{L:NotCF}
Let $\F$ be as above.  Then $\F$ is not context-free.
\end{lemma}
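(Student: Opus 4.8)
The plan is to apply Ogden's Lemma to a carefully chosen family of strings in $\F$ and derive a contradiction by pumping. The key difficulty in any such argument is choosing a word whose structure forces the pumped substrings $v$ and $x$ to violate one of the global constraints defining $\F$ — most naturally the balance condition $|w|_( = |w|_)$ inherited from Lemma \ref{L:Murray}, together with the requirement that the two trees in a convolution have equal caret counts.

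\emph{Choice of word and distinguished positions.} I would take a string $z = \otimes(u,u)$ where $u \in \LT$ encodes a tree whose interior subtree is deeply nested, so that $u$ contains a block of the form $\bigl($ raised to a high power, say $(^N$, followed eventually by the matching $)^N$. Concretely, consider $u = e\,(^N\, a\, )^N\, e$ (or a similar valid encoding of a left-nested interior subtree hanging off an exterior caret), chosen long enough that the number of distinguished positions I mark exceeds the pumping constant $p$. I would mark exactly the $N$ opening symbols $($ as distinguished. By the preceding remark, Ogden's Lemma lets me assume $v$ and $x$ avoid the very first letter of $z$, which is harmless here since the first letter is the exterior $e$ anyway.

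\emph{Extracting the contradiction.} Since the distinguished positions all lie among the $($ symbols, condition (1) forces $v$ and $x$ together to contain at least one $($. Condition (2) bounds the distinguished positions in $vwx$ by $p$, so $vwx$ cannot stretch across both the block of $($'s and the entire matching block of $)$'s in a way that lets $v$ and $x$ straddle both symbols symmetrically. I would argue that pumping (taking $i = 2$, say, or $i = 0$) then changes the number of $($ symbols without making a compensating change in the number of $)$ symbols, violating condition (2) of Lemma \ref{L:Murray}, namely $|w|_( = |w|_)$. Because each coordinate of the convolution must independently be a valid tree encoding in $\LT$, and because no $\diamond$ symbols are allowed in $\F$, the pumped string $uv^iwx^iy$ fails to lie in $\F$, the desired contradiction. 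A small amount of care is needed to handle all placements of $v$ and $x$ (for instance the case where one of them sits entirely inside the $($-block and the other is empty or sits elsewhere), but in every case the parenthesis-balance or the equal-length-of-the-two-coordinates condition breaks.

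\emph{Main obstacle.} The hard part will be ruling out the single configuration in which $vwx$ happens to span both a suffix of the $($-block and a matching prefix of the $)$-block, with $v$ consisting of $($'s and $x$ of $)$'s in equal number, since such a pump \emph{would} preserve the global balance. This is precisely what Ogden's Lemma is designed to defeat: by marking only the $($ symbols as distinguished and using condition (2) to cap the distinguished positions inside $vwx$, I can force $vwx$ to be too short to reach the $)$-block, so $x$ cannot consist of $)$'s at all. Verifying that this marking genuinely confines $vwx$ to the $($-block (using that the $a$ separating the two blocks contributes length but no distinguished marks) is the crux of the argument, and it is exactly why the standard pumping lemma would be insufficient and Ogden's refinement is invoked.
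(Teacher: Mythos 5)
There is a genuine gap, and it sits exactly at the point you flag as the crux. Ogden's Lemma condition (2) bounds the number of \emph{distinguished} positions in $vwx$, not the length of $vwx$. Since you mark only the $N$ symbols $``("$ as distinguished, the factor $vwx$ may contain at most $p$ open parentheses but arbitrarily many undistinguished symbols --- in particular the separating $``a"$ and as many $``)"$ symbols as it likes. So nothing prevents the decomposition in which $v$ is a single $``("$ at the end of the open block, $w$ contains the $``a"$, and $x$ is a single $``)"$ at the start of the close block: this satisfies both Ogden conditions (one distinguished position in $vx$, one in $vwx$) and pumping it preserves the balance $|w|_( = |w|_)$ and produces another valid nesting. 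Your claim that ``I can force $vwx$ to be too short to reach the $)$-block'' conflates the count of distinguished positions with the length of $vwx$; the parenthetical observation that the $``a"$ ``contributes length but no distinguished marks'' is precisely why the confinement fails rather than why it succeeds. (There are also secondary problems: $u = e(^Na)^Ne$ contains no $``r"$ and so is not in $\LT$, and $\otimes(u,u)$ is an unreduced representative of the identity, hence not in $\F$; but these are repairable, whereas the pumping argument itself is not.)

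The paper's proof avoids this by doing essentially the opposite with the markings: it distinguishes \emph{all but the first and last} positions, so that the bound on distinguished positions in $vwx$ really is a length bound $|vwx|\leq p$ (up to the two boundary symbols), with Ogden invoked only to keep $v$ and $x$ off the initial ${e \choose e}$ and final ${r \choose r}$. The word is then engineered so that this length bound bites: in the convolution $ {e \choose e}{( \choose a}^p{( \choose (}{a \choose (}^p{) \choose a}^p{) \choose )}{a \choose )}^p{r \choose r}$ the open- and close-parenthesis blocks of the top line are separated by the length-$p$ block ${a \choose (}^p$ (and symmetrically for the bottom line), so a factor of length at most $p$ containing a $``("$ in one coordinate can never also contain a $``)"$ in that same coordinate, and pumping breaks the balance condition of Lemma \ref{L:Murray}. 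The offset between the two coordinates is what lets every separator position still be ``distinguished'' while carrying balanced-parenthesis content in the other line. If you want to salvage your approach you would need a word with this kind of built-in length-$p$ buffer of distinguished symbols between the matched blocks, rather than relying on a marking that leaves the buffer and the closing block invisible to condition (2).
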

\begin{proof}
Assume for contradiction that $\F$ is context-free. Let $p$ be the constant guaranteed by Ogden's Lemma, and consider the convolution
$$w= {e \choose e} { ( \choose a }^p{ ( \choose ( }{ a \choose ( }^p{ ) \choose a }^p{ ) \choose ) }{ a \choose ) }^p{ r \choose r }$$
which lies in $\F$.  Mark all but the first and last symbol as distinguished, and apply Ogden's Lemma.  As $v$ and $x$ together must have at least one distinguished position, one of these strings  must contain either $``(''$ or $``)''$ in either the top or bottom line. First suppose $vx$ contains $``(''$ in the top line. Then $vx$ contains a symbol from the substring
$${ ( \choose a }^p{ ( \choose ( }.$$
However, since $|vwx| \leq p$ and $|{a \choose (}^p| = p$, it follows that $vx$ cannot contain the symbol $``)''$ in the top line. An almost identical argument shows that if $vx$ contains $``(''$ in the bottom line, it cannot also contain $``)''$ in the bottom line. In either case, $uv^iwx^iy \notin \F$ for $i \neq 1$, and hence $\F$ is not context free.
\end{proof}

\section{Multiplier languages for the $3$-counter graph automatic structure}\label{subsec:x_0multiplier}

We now construct the automata $M_{x_0}$ and $M_{x_1}$, which accept the multiplier languages $$L_{x_0} = \{\otimes(u,v) | u,v \in {\mathcal F}, \ \ou x_0 = \ov \}$$ and $$L_{x_1} = \{\otimes(u,v) | u,v \in {\mathcal F}, \ \ou x_1 = \ov \}.$$  As noted in \cite{ET}, when constructing ${\mathcal C}$-graph automatic groups, a multiplier automaton which reads the convolution $\otimes(u,v)$ must both check that $u$ and $v$ lie in the normal form language {\em and} that $\ou x_1 = \ov$.  This is not an issue for either automatic or graph automatic groups; in both of these scenarios, the normal form language is regular, and since intersections of regular languages are regular it is not necessary for the multiplier automata to check membership in the normal form language.  As our normal form languages are not regular, the multiplier automata must also verify this membership condition.  The language $L_{x_0}$ is accepted by a nonblind deterministic $2$-counter automaton, while $L_{x_1}$ is accepted by a nonblind deterministic $3$-counter automaton.

It is proven in \cite{ET} that for $\mathscr R=\{$regular languages$\}$ or $\mathscr C_1=\{$1-counter languages$\}$ we have $L_{x}\in \mathscr R$  (resp. $\mathscr C_1$) if and only if $L_{x^{-1}}\in \mathscr R$ (resp. $\mathscr C_1$).  Hence it suffices to consider only the multiplier languages  $L_{x_0}$ and $L_{x_1}$.

\subsection{The multiplier language $L_{x_0}$.} Multiplication of $g=(T,S)$ by $x_0$ induces a particular rearrangement of the subtrees of the original diagram, as shown in Figure~\ref{F:Rotation} when the root caret of $T$ has at least one left child.  To compute this product, the tree pair diagram for $x_0$, which corresponds to $\otimes(re,er) \in \F$, is placed to the left of the tree pair diagram from $g$.  Then (possibly) unreduced representatives of the two elements are created in which the middle trees are identical; this may entail adding an additional caret as a left child to caret $0$ in both $T$ and $S$.  The outermost trees together form the product $gx_0$.  The final result of any multiplication of tree pair diagrams is a unique reduced tree pair diagram, but this may include an intermediate stage where the resulting pair of trees is not reduced.

\begin{figure}[h!]
\begin{center}
\begin{tikzpicture}[scale=.7]

		\coordinate (A) at (1,0) {};
		\coordinate (B) at (0,-2) {};
		\node [state, rectangle, minimum size = 17pt] (C) at (2,-2) {$C$};
		\node[state, rectangle, minimum size = 17pt] (D) at (1,-4) {$B$};
		\node[state, rectangle, minimum size = 17pt] (E) at (-1,-4) {$A$};
		\node (F) at (1,.5) {$T$ from $g=(T,S)$};
		
		\node (H) at (2.9, -2.05) {};
		\node (I) at (4.1, -2.05) {};
		
		\path[->, thick] (H) edge (I);

    \foreach \point in {A,B}
        \fill [black] (\point) circle (1.5pt);

     	\draw (B) -- (A) -- (C);
	\draw (D) -- (B) -- (E);
	
		\coordinate (J) at (6,0) {};
		\node[state, rectangle, minimum size = 17pt] (K) at (5,-2) {$A$};
		\coordinate (L) at (7,-2) ;
		\node[state, rectangle, minimum size = 17pt] (M) at (6,-4) {$B$};
		\node[state, rectangle, minimum size = 17pt] (N) at (8,-4) {$C$};
		\node (O) at (6,.5) {$T'$ from $gx_0=(T',S')$};
		
     \foreach \point in {J,L}
        \fill [black] (\point) circle (1.5pt);

       \draw (K) -- (J) -- (L);
       \draw (M) -- (L) -- (N);

\end{tikzpicture}
\caption{Part of the tree pair diagrams for $g = (T,S)$ and $gx_0=(T',S')$.  The letters $A,B$, and $C$ represent possibly empty subtrees, and $S=S'$. }\label{F:Rotation}
\end{center}
\end{figure}

\begin{proposition}\label{prop:x0-mult}
The multiplier language for the generator $x_0$ defined by $$L_{x_0} = \{ \otimes(u,v) | u,v \in {\mathcal F} \ and \ \ou x_0=_F\ov\}$$ is recognized by a nonblind deterministic $2$-counter automaton.
\end{proposition}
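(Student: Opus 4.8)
The plan is to build the automaton $M_{x_0}$ so that, reading the four-track convolution $\otimes(u,v)$ (the domain and range trees of $\ou$ and of $\ov$), it verifies only that $u\in\F$ together with a \emph{regular} relationship between $u$ and $v$ encoding right-multiplication by $x_0$; membership $v\in\F$ will then come for free. First I would push the subtree rearrangement of Figure \ref{F:Rotation} down to the level of caret-label strings. Writing the domain tree of $\ou$ in infix order as $T_u=\alpha\, e\,\beta\, r\,\gamma$, where $r$ is the unique root label, $e$ is the root's left child $n$, $\alpha,\beta$ are the infix words of its left and right subtrees $A,B$, and $\gamma$ is the infix word of the root's right subtree $C$, a direct computation shows that the rotated tree $T'$ has infix word $\alpha\, r\,\beta\, e\,\gamma$. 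In other words, \emph{the rotation merely interchanges the root label $r$ and the exterior label $e$ of the root's left child}, fixing every other label (and in particular all of $A,B,C$), while the range tree is unchanged, so $S_v=S_u$. Since the right subtree of an exterior caret is interior we have $\beta\in\SI^*$, so $n$ is exactly the last $e$ preceding the root in $T_u$; hence this decomposition is unique and finite-state detectable.

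Next I would record that this relationship is regular. The condition $S_u=S_v$ is plainly regular, and the top-track condition $\{\otimes(T_u,T_v)\mid T_u=\alpha e\beta r\gamma,\ T_v=\alpha r\beta e\gamma,\ \beta\in\SI^*\}$ is of the form handled by part (4) of Lemma \ref{L:RegularLanguages}, namely a swap of two fixed symbols with arbitrary equal material before, between, and after, intersected with the regular constraint $\beta\in\SI^*$. A product automaton reading all four tracks therefore recognizes this relationship; call the resulting regular language $R_{x_0}$.

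The crucial economy is that we never check $v\in\F$ directly, which would cost a second pair of counters. Checking $u\in\F$ costs two counters, since $\F$ is a $2$-counter language and the $v$-tracks may be ignored, and checking $R_{x_0}$ costs none. I would then argue that for $u\in\F$ the relationship $\otimes(u,v)\in R_{x_0}$ holds for exactly one $v$, namely $v=\nu(\ou x_0)$, which lies in $\F$ by construction. Hence
\[
L_{x_0}=\{\otimes(u,v)\mid u\in\F\}\cap R_{x_0},
\]
and since the intersection of a $2$-counter language with a regular language is again $2$-counter, the conclusion follows.

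The main obstacle is the boundary behavior, which forces $R_{x_0}$ to be a finite union of regular pieces rather than the single swap above, each piece selected by finite-state information read off $u$. When the root of $T_u$ has no left child (so $T_u$ begins with $r$), the construction of Figure \ref{F:Rotation} first inserts a caret as the left child of the root in both trees; the net effect is the insertion of one exterior label after the root of $T_u$ together with a bounded change to $S_u$, again a regular bounded-edit relationship. When the rotated pair fails to be reduced---which, by the reduction criterion, occurs precisely when $B$ and $C$ are empty and the last caret of $S_u$ is exposed, i.e.\ $S_u$ ends in $re$ or $ee$---the trailing common caret pair must be deleted, so $T_v$ and $S_v$ are each the swapped string with its last symbol removed. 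The real work is to check that these finitely many cases are mutually exclusive, are distinguished by regular conditions on $u$, and together assign to each $u\in\F$ the unique reduced normal form of $\ou x_0$; once this is verified, $R_{x_0}$ is regular and the counter count is exactly two.
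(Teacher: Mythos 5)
Your proposal is correct and follows essentially the same route as the paper: its proof splits into exactly your three cases (the generic $r$/$e$ swap with $S$ unchanged, the insertion when the root of $T$ has no left child, and the deletion when the product is unreduced), shows each relation is regular via Lemma \ref{L:RegularLanguages}, and intersects the union with the $2$-counter language $\{\otimes(u,v)\mid u\in\F,\ v\in\Sigma_T^*\}$, relying as you do on the fact that the regular relation forces $v\in\F$ whenever $u\in\F$. (Only note that the swap language you invoke is the one called $L_4$ in item (3) of Lemma \ref{L:RegularLanguages}, not a ``part (4)''.)
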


\begin{proof}
The language $L_{x_0}$ will be the union of three sub-languages, based on the analysis of multiplication by $x_0$ given below.

Let $g = (T,S)$ as a reduced pair of trees.  Recall that $\nu(g) \in \F$ is the convolution of strings of caret types describing the trees $S$ and $T$.  To describe $u=\nu(g)$ and $\nu(gx_0)$ we divide into three cases depending on whether the root caret of $T$ has no left child, no right child, or has children on both sides.  In terms of $u=\nu(g)$, these cases correspond, respectively, to

\medskip

\begin{enumerate}
\item an initial letter of $``r''$ in the top line of $u$ when written as a convolution;
\item initial letter of $``e''$ and final letter $``r''$ in the top line of $u$ when written as a convolution;
\item neither of the above conditions.
\end{enumerate}
In each case we obtain a unique string in ${\mathcal F}$ which corresponds to $gx_0$.

Case 1: the root caret of $T$ has no left child. To perform the multiplication by $x_0$ a caret must be added to the left leaf of the root caret in $T$ and to the left leaf of caret $0$ in $S$, creating an unreduced representative $g^* = (T^*, S^*)$.  Let $gx_0 = (T', S')$ denote the product as a reduced tree pair diagram.

In this case we must have $\nu(g) = {r \choose z_0} { \gamma_1 \choose \gamma_2}$ for some $z_0 \in \{e,r\}$ and $\gamma_1,\gamma_2 \in \ST^*$, and using the combinatorial rearrangement of carets induced by multiplication by $x_0$, we can write

\medskip

\begin{itemize}
	\item $\nu(g) = {r \choose z_0} { \gamma_1 \choose \gamma_2}$,

\smallskip

	\item $\nu(g^*) = {e \choose e} {r \choose z_0} { \gamma_1 \choose \gamma_2}$, and

\smallskip

	\item $\nu(gx_0) = {r \choose e}{e \choose z_0} { \gamma_1 \choose \gamma_2}$.
\end{itemize}

Notice that this includes $g$ being the identity, in which case $\nu(g) = {r \choose r}$ and $\nu(gx_0) = {r \choose e} {e \choose r}$.

Case 2: the root caret of $T$ does have a left child and the following three conditions hold

\medskip

\begin{enumerate}
	\item[(i)] the root caret of $T$ has no right child,
	\item[(ii)] the left child of the root caret of $T$ has no right child, and
	\item[(iii)] the last exterior caret of $S$ is exposed.
\end{enumerate}
Then multiplication by $x_0$ yields an unreduced representative $(gx_0)^* = (T^*, S^*)$. We must eliminate a redundant caret from both $T^*$ and $S^*$ to obtain $gx_0 = (T', S')$.

Given conditions (i), (ii) and (iii), the string $\nu(g)$ must end with the symbols ${e \choose z_0} {r \choose e}$ for some $z_0 \in \{ e,r\}$.  Hence the string representing the unreduced $(gx_0)^* = (T^*, S^*)$ must ends in ${r \choose z_0} {e \choose e}$. To obtain the reduced tree pair diagram for the product, the final redundant caret must be eliminated from the tree pair diagram $(T^*, S^*)$; specifically, the right child of the root in $T^*$ has two exposed leaves, as does the final (rightmost) caret in $S^*$. These are removed to obtain the reduced tree pair diagram $(T',S')$ for $gx_0$.

Thus, for some $\gamma_1,\gamma_2 \in \Sigma_T^*$ and $z_0 \in \{e,r\}$ we have

\medskip

\begin{itemize}
	\item $\nu(g) = {\gamma_1 \choose \gamma_2}{e \choose z_0} { r \choose e}$,

\smallskip

	\item $\nu(gx_0^*) = {\gamma_1 \choose \gamma_2}{r \choose z_0} {e \choose e}$, and

\smallskip

	\item $\nu(gx_0) = {\gamma_1 \choose \gamma_2}{r \choose z_0} $.
\end{itemize}

Case 3: the element $g=(T,S)$ does not belong to Case 1 or Case 2.

In this case, the rearrangement of the subtrees of the domain tree is exactly as depicted in Figure~\ref{F:Rotation}. Multiplication has no effect on the range tree: that is, $S = S'$. Let $\gamma_A, \gamma_B, \gamma_C \in \LI$ denote the substrings that encode the subtrees $A, B,$ and $C$, respectively and $\chi_A,\chi_B,\chi_C \in \Sigma_T^*$ the strings of labels for the carets in $S$ paired, respectively, with the carets in subtrees $A, B$ and $C$ of $T$. Then
\begin{itemize}
	\item $\nu(g) = {\gamma_A \choose \chi_A}{e \choose z_0}{\gamma_b \choose \chi_B}{r \choose z_1}{\gamma_C \choose \chi_C} $ and

\smallskip

	\item $\nu(gx_0) = {\gamma_A \choose \chi_A}{r \choose z_0}{\gamma_b \choose \chi_B}{e \choose z_1}{\gamma_C \choose \chi_C}  $
\end{itemize}
for some $z_0,z_1 \in \Sigma_T$.

Define a language ${\mathcal N}_0$ to consist of the union of all convolutions $\otimes(u,v)$ of the following forms:

\medskip

\begin{itemize}
\item $u = {r \choose z_0} { \gamma_1 \choose \gamma_2}$ and $v={r \choose e}{e \choose z_0} { \gamma_1 \choose \gamma_2}$

\item $u= {\gamma_1 \choose \gamma_2}{e \choose z_0} { r \choose e} $ and $v= {\gamma_1 \choose \gamma_2}{r \choose z_0} $

\item $u= {\gamma_A \choose \chi_A}{e \choose z_0}{\gamma_b \choose \chi_B}{r \choose z_1}{\gamma_C \choose \chi_C} $ and $v= {\gamma_A \choose \chi_A}{r \choose z_0}{\gamma_b \choose \chi_B}{e \choose z_1}{\gamma_C \choose \chi_C} $
\end{itemize}
where $z_0,z_1 \in \Sigma_T$ and $\gamma_i \in \Sigma^*_T$.  Note that $u$ and $v$ may no longer lie in ${\mathcal F}$ or even $\LTT$.

It follows from Lemma \ref{L:RegularLanguages} that these three sub-languages are regular, and hence their union ${\mathcal N}_0$ is regular as well. The language $L_{x_0}$ is then the intersection of ${\mathcal N}_0$ with the $2$-counter language $\{\otimes(u,v) | u \in {\mathcal F}, \ v \in \Sigma_T^*\}$, because the construction of ${\mathcal N}_0$ guarantees that if $u \in {\mathcal F}$ and $\otimes(u,v) \in {\mathcal N}_0$, then the string $v$ lies in ${\mathcal F}$.  Hence the language $L_{x_0}$ can be recognized by a nonblind deterministic $2$-counter automaton.
\end{proof}

\subsection{The multiplier language $L_{x_1}$.}\label{subsec:x_1multiplier}
If $g = (T,S) \in F$, then the process of multiplying $g$ by $x_1$ is described in five cases which depend on the arrangement of carets in $T$ and $S$.

In this section, if $g = (T,S)$ is as in Figure \ref{F:Rotation2}, we will use $\gamma_A, \gamma_B,\gamma_C$ and $ \gamma_D$ to denote the strings in $L_{int}$ that encode the subtrees $A,B,C$ and $D$, respectively.  Let $\chi_A, \chi_B,\chi_C$ and $\chi_D$ denote the strings in $\Sigma_T^*$ that encode the carets of $S$ which are paired, respectively, with subtrees $A,B,C$ and $D$.  If $\gamma_R$ is empty for $R \in \{A,B,C,D\}$ then we say that $\chi_R$ is empty as well.

\begin{figure}[h!]
\begin{center}
\begin{tikzpicture}[scale=.5]

		\coordinate (A) at (1,0) {};
		\coordinate (B) at (0,-2) {};
		\node [state, rectangle, minimum size = 17pt] (C) at (2,-2) {$D$};
		\node[state, rectangle, minimum size = 17pt] (D) at (1,-4) {$C$};
		\node[state, rectangle, minimum size = 17pt] (E) at (-1,-4) {$B$};
		\node (F) at (0,2.5) {$T$ from $g=(T,S)$};
		\coordinate (X) at (0,2);
		\node[state, rectangle, minimum size = 17pt] (Y) at (-1,0) {$A$};
		
		\node (H) at (3.25, -1) {};
		\node (I) at (4.75, -1) {};
		\path[->] (H) edge (I);
		\draw (Y) -- (X) -- (A);

    \foreach \point in {A,B,X}
        \fill [black] (\point) circle (1.5pt);

     	\draw (B) -- (A) -- (C);
	\draw (D) -- (B) -- (E);
	
		\coordinate (J) at (8,0) {};
		\node[state, rectangle, minimum size = 17pt] (K) at (7,-2) {$B$};
		\coordinate (L) at (9,-2) {};
		\node[state, rectangle, minimum size = 17pt] (M) at (8,-4) {$C$};
		\node[state, rectangle, minimum size = 17pt] (N) at (10,-4) {$D$};
		\node (O) at (7,2.5) {$T'$ from $gx_1=(T',S')$};
		\node[state, rectangle, minimum size = 17pt] (Z) at (6,0) {$A$};
		\coordinate (W) at (7,2);
		
     \foreach \point in {J,L,W}
        \fill [black] (\point) circle (1.5pt);

       \draw (K) -- (J) -- (L);
       \draw (M) -- (L) -- (N);
       \draw (J) -- (W);
       \draw (Z) -- (W);

\end{tikzpicture}
\caption{If $g = (T,S)$ and $gx_1=(T',S')$, where the tree $T$ contains at least a root caret whose right child has a left child, the combinatorial rearrangement of the subtrees of $T$ yielding $T'$.  Here, $A,B, C$ and $D$ represent possibly empty subtrees and $S=S'$.}\label{F:Rotation2}
\end{center}
\end{figure}

We now identify all pairs of strings in $\F$ which correspond to possible pairs $g$ and $gx_1$.  In Cases (1) through (4) below the difference between $\nu(g)$ and $\nu(gx_1)$ lies either at the end of the string or immediately after the unique caret of type $``r''$ in the top line of the convolution $\nu(g)$.  A finite state machine can easily detect this change.  Then two counters are required to ensure that $\nu(g)$ lies in $\F$, and by the construction of the language, $\nu(gx_1)$ must lie in $\F$ as well. Thus the set of all convolutions described in Cases (1) through (4) will form a nonblind deterministic $2$-counter language. The set of strings described in the final case of this section will require an additional counter to detect that the change between $\nu(g)$ and $\nu(gx_1)$ lies at the correct position in the interior of the convolution.

Case 1: Two carets must be added to $g=(T,S)$ in order to multiply by $x_1$.  Suppose first that the root caret of $T$ has no right child. Then we must add a right child with a left child to the rightmost carets of $T$ and $S$, creating an unreduced representative of $g$, before performing the multiplication. Let $g^* = (T^* , S^*)$ denote this unreduced representative for $g$.

Since the root caret of $T$ has no right child, the string corresponding to $T$ ends with $``r''$. Then for some $z_0 \in \{e,r\}$, we have

\medskip

\begin{itemize}
	\item $\nu(g) = { \gamma_A \choose \chi_A} {r \choose z_0}$,
	\item $\nu(g*) = {\gamma_A \choose \chi_A} {r \choose z_0} {a \choose a} {e \choose e}$, and
	\item $\nu(gx_1) = {\gamma_A \choose \chi_A} {r \choose z_0} {e \choose a} {e \choose e}$.
\end{itemize}
Notice that this case also includes the identity with $\gamma_A = \chi_A = \epsilon$ and $z_0 = r$. Then $\nu(x_1) = {r \choose r} {e \choose a} {e \choose e}$.

Case 2: One caret must be added to $g=(T,S)$ in order to multiply by $x_1$.  Suppose that the root caret of $T$ has a right child which does not have a left child, so that the string in $\Sigma_T^*$ corresponding to $T$ is $\gamma_A r e \gamma_D$. To perform  multiplication by $x_1$, we must add a left child to the right child of the root caret of $T$ and to the corresponding leaf in $S$, creating an unreduced representative of $g$, which we denote $g^* = (T^* , S^*)$.  We consider three possibilities for this multiplication, based on the caret type $z_0$ of the caret in $S$ paired with the root caret of $T$.  Let $n$ denote the infix number of the root caret in $T$ and the corresponding caret of type $z_0$ in $S$.

In subcases (a)-(c) below, we always have
$$\nu(g) = {\gamma_A \choose \chi_A} {r \choose z_0} {e \choose z_1} {\gamma_D \choose \chi_D}$$
where $z_0,z_1 \in \Sigma_T$.

\begin{enumerate}
	\item[(a)] First suppose $z_0 \in \{e,r,(,b)\}$. Then the new caret is added to $S$ as the left child of the caret of infix number $n+1$ and will be type $``a''$ in $S^* = S'$. Then we have

\smallskip

	\begin{itemize}
		\item $\nu(g^*) = {\gamma_A \choose \chi_A} {r \choose z_0} {a \choose a} {e \choose z_1} {\gamma_D \choose \chi_D}$, and
		\item $\nu(gx_1) =  {\gamma_A \choose \chi_A} {r \choose z_0} {e \choose a} {e \choose z_1} {\gamma_D \choose \chi_D}$.
	\end{itemize}

\smallskip

	\item[(b)] Next suppose $z_0$ is type $``a''$. Then the new caret is added to $S$ as the right child of caret $n$ (of type $z_0$), which changes they type of caret $n$ in $S$ to $``(''$, and the new caret is of type $``)''$. Then we have

\smallskip

	\begin{itemize}
		\item $\nu(g^*) = {\gamma_A \choose \chi_A} {r \choose (} {a \choose )} {e \choose z_1} {\gamma_D \choose \chi_D}$, and
		\item $\nu(gx_1) =  {\gamma_A \choose \chi_A} {r \choose (} {e \choose )} {e \choose z_1} {\gamma_D \choose \chi_D}$.
	\end{itemize}

\smallskip

	\item[(c)] Finally, suppose $z_0$ is type $``)''$. As in (b), the new caret is added to $S$ as the right child of caret $n$, which changes to type $``b''$, and the new caret is of type $``)''$. Then we have:

\smallskip

	\begin{itemize}
		\item $\nu(g^*) = {\gamma_A \choose \chi_A} {r \choose b} {a \choose )} {e \choose z_1} {\gamma_D \choose \chi_D}$, and
		\item $\nu(gx_1) =  {\gamma_A \choose \chi_A} {r \choose b} {e \choose )} {e \choose z_1} {\gamma_D \choose \chi_D}$.
	\end{itemize}

\end{enumerate}
In all three cases, $\nu(gx_1)$ is a reduced string.  If $\chi_D$ is empty, then $z_1 \neq e$ because $\nu(g)$ is itself a reduced string.  Note that for a given string $\nu(g)$ in Case 2, a finite state automaton can decide which subcase $\nu(g)$ lies in.

Case 3: After multiplication by $x_1$, two carets must be removed to obtain the reduced tree pair diagram for the product. Suppose that the root caret of $T$ has a right child which has a left child. If the following three conditions hold
\begin{enumerate}
\item[(i)] the right child of the root caret of $T$ does not have a right child;
\item[(ii)] the left child of the right child of the root caret of $T$ has no children; and
\item[(iii)] the final two carets of $S$ are exterior carets with no interior children
\end{enumerate}
then multiplication yields an unreduced representative $gx_1^* = (T^* , S^*)$. We must eliminate two redundant carets from each of $T^*$ and $S^*$ to obtain $gx_1 = (T', S')$.

Given conditions (i) and (ii), the string corresponding to $T$ ends with $``rae''$. Condition (iii) implies that the string corresponding to $S$ ends with $z_0ee$ for $z_0 \in \{r, e\}$. Then we have

\medskip

\begin{itemize}
	\item $\nu(g) = { \gamma_A \choose \chi_A} {r \choose z_0} {a \choose e} {e \choose e }$,
	\item $\nu(gx_1^*) = {\gamma_A \choose \chi_A} {r \choose z_0} {e \choose e } {e \choose e}$, and
	\item $\nu(gx_1) = {\gamma_A \choose \chi_A}  {r \choose z_0}$.
\end{itemize}

Case 4: After multiplication by $x_1$, one caret must be removed to obtain the reduced tree pair diagram for the product. Suppose that the string $w$ does not lie in Cases 1, 2, or 3. Suppose further that the root caret of $T$ has a right child that has a left child. If the following three conditions hold
\begin{enumerate}
\item[(i)] the right child of the root caret of $T$ does not have a right child;
\item[(ii)] the left child of the right child of the root caret of $T$ does not have a right child (but may have a left child); and
\item[(iii)] the final caret of $S$ is an exterior caret with no left child
\end{enumerate}
then multiplication yields an unreduced representative $gx_1^* = (T^*, S^*)$. We must eliminate a single redundant caret from each of $T^*$ and $S^*$ to obtain $gx_1 = (T', S')$.

It follows from conditions (i) and (ii) that the string in $\Sigma_T^*$ corresponding to $T$ is $\gamma_A r \gamma_B a e$. Then for $z_0\in \Sigma_T$ and $z_1 \in \{e,r\}$ we have

\medskip

\begin{itemize}
	\item $\nu(g) = {\gamma_A \choose \chi_A} {r \choose z_0} {\gamma_B \choose \chi_B} {a \choose e} {e \choose e}$,
	\item $\nu(gx_1^*) = {\gamma_A \choose \chi_A} {r \choose z_0} {\gamma_B \choose \chi_B} {e \choose e} {e \choose e}$, and
	\item $\nu(gx_1) = {\gamma_A \choose \chi_A} {r \choose z_0} {\gamma_B \choose \chi_B} {e \choose e}$.
\end{itemize}
To avoid overlap with Case (3), if $\chi_B = 0$ then $z_0 \notin \{e,r\}$ to ensure that there is only one redundant caret in $\nu(gx_1^*)$.

To summarize Cases (1) through (4), let ${\mathcal N_1}$ be the set of all $\otimes(u,v)$ where $u=\nu(g)$ and $v=\nu(gx_1)$ are as listed in Cases (1) through (4), where $\gamma_i$ and  $\chi_i$ are now allowed to be any strings in $\Sigma_T^*$.  As with the construction of $L_{x_0}$, we are not concerned with whether $u,v \in \F$.  Each possible difference between the strings $u$ and $v$ in the previous four cases is easily detected by a finite state machine, and thus the language ${\mathcal N}_1$ is regular.
Finally, intersect ${\mathcal N}_1$ with the language $\{\otimes(x,y) \mid x \in \F, \ y \in \Sigma_T^*\}$ where the latter is a nonblind deterministic $2$-counter language.  Denote the intersection by ${\mathcal N}$; it is also a nonblind deterministic $2$-counter language and consists of exactly those convolutions covered in Cases (1) through (4). Note that it is enough to check that one of $u$ and $v$ lies in $\F$ because the construction of ${\mathcal N}_1$ ensures that when one component lies in $\F$, the other one does as well.

Case 5: No carets must be added to multiply by $x_1$ and the resulting tree pair diagram is necessarily reduced.  That is, suppose that $g = (T,S)$ does not belong to any of the previous cases. Then multiplication by $x_1$ does not affect the range tree;  if $g = (T,S)$ and $gx_1 = (T', S')$, then $S = S'$. The trees $T$ and $T'$ are depicted in Figure \ref{F:Rotation2}.  We consider two cases according to whether the subtree $C$ is empty, and note that a finite state machine can detect which case a convolution lies in.   Note that $\otimes(\Sigma_T^*,\F)$ is a $2$-counter language, because $\Sigma_T^*$ is regular and $\F$ requires two counters.

In Case 5, there are at most two carets which change type between $T$ and $T'$:
\begin{enumerate}
\item The parent caret of subtrees $B$ and $C$; this caret is interior in $T$ and exterior in $T'$, and we denote it $d$.  This change always occurs.

\item The root caret of subtree $C$, which changes type if and only if $C$ is not empty.  We denote this caret by $f$.
\end{enumerate}
We show below that the $x_1$ multiplier language can be easily expressed using $3$ counters.  In this case, we switch our perspective and focus on the tree $T'$  from the tree pair diagram for $gx_1$ rather than the tree $T$.

Case 5(a): Subtree $C$ is empty.  In this case,
$$u=\nu(g) =  {\gamma_A \choose \chi_A} {r \choose z_0} {\gamma_B \choose \chi_B} {a \choose z_1} {e \choose z_2} {\gamma_D \choose \chi_D}$$
 and
$$v=\nu(gx_1) =  {\gamma_A \choose \chi_A} {r \choose z_0} {\gamma_B \choose \chi_B} {e \choose z_1} {e \choose z_2} {\gamma_D \choose \chi_D}$$
for some $z_0,z_1,z_2 \in \Sigma_T$.
Let ${\mathcal K}_1$ be the regular language of all strings $\otimes(u,v)$ where $u$ and $v$ have the above form with $\gamma_B$ and $\gamma_C$ replaced by arbitrary strings from $\Sigma_{int}^*$ and $\gamma_A, \ \gamma_D$ and $\chi_Y$ replaced by arbitrary strings from $\Sigma_T^*$, for $Y \in \{A,B,D\}$.  As $\gamma_B$ consists only of interior caret types, a finite state machine can easily check that after ${r \choose z_0}$ is read in $v$, the next symbol ${ x \choose y}$ where $x \notin \{a,b,(,)\}$ is ${e \choose z_1}$ and that it is paired with ${ a \choose z_1}$ in $u$.  Assume without loss of generality that this condition is also verified in ${\mathcal K}_1$.  Then ${\mathcal L}_1={\mathcal K}_1 \cap \otimes(\F,\Sigma_T^*)$ is a $2$-counter language accepting exactly those convolutions $\otimes(u,v)$ in Case 5(a), that is, with $C= \emptyset$.

Case 5(b): Subtree $C$ is not empty.  In this case, in $\nu(gx_1)$ write $\gamma_C = \eta_1 f \eta_2$ and in $\nu(g)$ write $\gamma_C = \eta_1 \bar{f} \eta_2$ where $f$, resp. $\bar{f}$, corresponds to the root caret of the subtree $C$.  Here $\eta_i \in L_{int}^*$, and we let $\eta_1'$ and $\eta_2'$ in $\Sigma_T^*$ respectively denote the strings of caret types paired with the caret types in $\eta_1$ and $\eta_2$.  Then
$$u=\nu(g) =  {\gamma_A \choose \chi_A} {r \choose z_0} {\gamma_B \choose \chi_B} {( \choose z_1} {\eta_1 \choose \eta_1'} {\bar{f} \choose z_3} {\eta_2 \choose \eta_2'} {e \choose z_2} {\gamma_D \choose \chi_D}$$
 and
$$v=\nu(gx_1) =  {\gamma_A \choose \chi_A} {r \choose z_0} {\gamma_B \choose \chi_B} {e \choose z_1} {\eta_1 \choose \eta_1'} {f \choose z_3} {\eta_2 \choose \eta_2'} {e \choose z_2} {\gamma_D \choose \chi_D}$$
where $z_0,z_1,z_2,z_3 \in \Sigma_T$ and
\begin{enumerate}
\item if the right subtree of the root caret of $C$ is empty then $\bar{f} = )$ and $f=a$, or
\item if the right subtree of the root caret of $C$ is not empty then $\bar{f} = b$ and $f=($.
\end{enumerate}
Let ${\mathcal K}_2$ be the regular language of all strings $\otimes(u,v)$ where $u$ and $v$ have the above form with all $\gamma_B, \ \eta_1$ and $\eta_2$ replaced by arbitrary strings from $\Sigma_{int}^*$ and $\gamma_A, \gamma_D, \chi_Y, \eta_1'$ and $\eta_2'$ replaced by arbitrary strings from $\Sigma_T^*$, for $Y \in \{A,B,D\}$.  As in Case 5(a), assume that the finite state machine recognizing ${\mathcal K}_2$ verifies that the change in the caret type of the parent caret of subtree $B$ occurs in the appropriate position (that is, the first $``e''$ following the root caret in the string representing $T'$).

To detect that the change in type from caret $\overline{f}$ to caret $f$ occurs in the correct position in the string we will use a single counter. The counter instructions used to show that a string from $\Sigma_{int}^*$ forms a valid interior subtree were as follows:
\begin{enumerate}
\item When the machine reads $``(''$ the counter is incremented by $1$.
\item The machine may read $``)''$ only when the value of the counter is positive, and then the value of the counter is decremented by $1$.
\item If the value of the counter is $0$ and the letter $``b''$ is read, the machine proceeds to a fail state.
\end{enumerate}
When these counter instructions are applied to a machine reading $\tau^{-1}_{tree}(T')$, the value of the counter is $0$ before each substring $\gamma_Y$ is read, for $Y \in \{A,B,C,D\}$.
Moreover, after the final letter of $\eta_1$ is read, the value of the counter is again $0$.  If $\eta_2$ is nonempty, then upon reading $``(''$ corresponding to the root caret of subtree $C$ (caret $f$) the value of the counter is set to $1$ and will not equal zero again until after the entire string $\eta_2$ has been read by the machine.  Hence in the Figure \ref{fig:case5} we check that the value of the counter is strictly positive as $\eta_2$ is read.

\begin{figure}[ht!]
\begin{center}
%\scalebox{.6}{
\tikzset{every state/.style={minimum size=2em}} %for smaller circles
\begin{tikzpicture}[->,>=stealth',shorten >=2pt,auto, node distance=3cm,
semithick]

\node[state, initial] (S) {$q_0$};
\node[state] (A) [right of=S] {$q_1$};
\node[state] (B) [right of=A] {$q_2$};
\node[state] (C) [below of=B]{$q_3$};
\node[state, accepting] (D) [below of=C] {$q_4$};
\node[state] (E) [below of=A] {$q_5$};

\path (S)
edge node {${r \choose z_0}$} (A)
edge [loop above] node {${x_1 \choose x_2}$} (S);

\path(A)
edge [loop above] node {${y \choose x_1}$} (A)
edge node {${e \choose z_1}$} (B);

\path(B)
edge [loop above] node {${y \choose x_1}_{\mathcal I}$} (B)
edge [swap] node {$\otimes\left({) \choose z_3},{a \choose z_3}\right)_{=0}$} (E)
edge node {$\otimes\left({b \choose z_3},{ ( \choose z_3}\right)_{=0,+1}$} (C);

\path(C)
edge [loop right] node {${y \choose x_1}_{{\mathcal I}, >0}$} (C)
edge node {${e \choose z_2}_{=0}$} (D);

\path(D)
edge [loop right] node {${x_1 \choose x_2}$} (D);

\path(E)
edge [swap] node {${e \choose z_2}$} (D);

\end{tikzpicture}
%}
\caption{For $\otimes(u,v) $ as in Case 5, this machine checks that the change in the root caret of subtree $C$ from $T$ to $T'$ occurs in the correct spot in the string and has the correct label change.  The edge labels correspond to the letters in the string $v=\nu(gx_1) \in \F$ and the corresponding letter from $u=\nu(g)$ is ignored, except at the root caret of subtree $C$, where both the letter from $u$ and from $v$ are considered.  The letters $z_0,z_1,z_2$ are as in the expressions for $u$ and $v$ above, $y \in \Sigma_{int}$ and $x_i \in \Sigma_T$. The start state is $q_0$ and the accept state is $q_4$.
}
\label{fig:case5}
\end{center}
\end{figure}

The machine in Figure \ref{fig:case5} reads convolutions $\otimes(u,v) \in {\mathcal K}_2$.  The state transitions are determined solely by the letter from $v$, with one exception: the carets corresponding to the root caret of the subtree $C$ in $T'$ (or $T$).  In Figure \ref{fig:case5} we label the state transitions only by the letter read in $v$ except at this position.  When $v \in \F$ as well, this machine works as follows.  Let $y \in \Sigma_{int}$ and $x \in \Sigma_{T}$ be arbitrary letters.  Let ${\mathcal I}$ denote the set of counter instructions given above.  The label ${ \alpha \choose \beta}_{\mathcal I}$ denotes the appropriate counter rules from ${\mathcal I}$ which apply to the letter ${ \alpha \choose \beta}$, for $\alpha,\beta \in \Sigma_T$.

\begin{enumerate}
\item The loop at state $q_0$ reads the string $\gamma_A$ corresponding to subtree $A$ in $T'$ and the corresponding string $\chi_A$ from $\nu(gx_1)$.  The edge labeled ${ r \choose z_0}$ can only be read when this subtree is completed.

    \medskip

\item The loop at state $q_1$ reads the string $\gamma_B$ corresponding to subtree $B$ in $T'$ and the corresponding string $\chi_B$ from $\nu(gx_1)$. The edge labeled ${ e \choose z_1}$ can only be read when this subtree is completed.

    \medskip

\item The loop at state $q_2$ reads the string $\eta_1$ corresponding to the left subtree of the root caret of subtree $C$ in $T'$ and the corresponding string $\eta_1'$. We apply the counter instructions ${\mathcal I}$ to this part of the machine.  After this string has been read, the value of the counter is $0$.

    \medskip

\item If the right subtree of the root caret of subtree $C$ is empty, then the root caret of $C$ in $\nu(gx_1)$ has type $``)''$ in $T'$ and the machine transitions to state $q_5$ verifying the counter value.  As this corresponds to the root caret of $C$ the machine simultaneously checks that the corresponding symbol from $u=\nu(g)$ is ${ a \choose z_3}$ before transitioning to state $q_5$.

    \medskip

\item If the right subtree of the root caret of subtree $C$ in $\nu(gx_1)$ is not empty, then the root caret of $C$ has type $``(''$ in $T'$ and the machine transitions to state $q_3$ verifying the counter value.  The machine simultaneously checks that the corresponding symbol from $u=\nu(g)$ is ${ b \choose z_3}$. The loop at state $q_3$ reads the string $\eta_2$ corresponding to the right subtree of the root caret of $C$, using the counter instructions ${\mathcal I}$ but additionally verifying that the value of the counter is never $0$ while this string is read.  After the entire string $\eta_2$ is read the counter value is $0$, and this is verified along the edge leading to state $q_4$.
    \medskip

\item The loop at state $q_4$ reads the string $\gamma_D$ corresponding to subtree $D$ in $T'$ and the corresponding string $\chi_D$.
\end{enumerate}

Let ${\mathcal L}_2$ be the intersection of the language accepted by the machine in Figure \ref{fig:case5} with $\otimes(\Sigma_T^*,\F)$; the latter verifies that $v \in \F$ (and hence $u \in \F$) as well.  Then ${\mathcal L}_2$ is the language of all convolutions accepted in Case 5(b); as it is the intersection of a $1$-counter language with a $2$-counter language, we conclude that ${\mathcal L}_2$ is a $3$-counter language. It follows that $L_{x_1}$ is the union of ${\mathcal L}_1$ and ${\mathcal L}_2$, hence a nonblind deterministic $3$-counter language. This completes the proof of Theorem \ref{T:GCS}.

\section{appendix}

Figure \ref{F:InteriorCaretPlacement} below shows that the map $\tau_-'$ defined in Section \ref{Sec:caret-lang} is well defined.

\input{huge-table}

\bibliographystyle{plain}
\bibliography{refs}

\end{document}